\newtheorem{theorem}{Theorem}[section]
\newtheorem{lemma}{Lemma}[section]
\newtheorem{proposition}{Proposition}[section]
\newtheorem{corollary}{Corollary}[section]
\theoremstyle{definition}
\newtheorem{remark}{Remark}[section]
\newtheorem{example}{Example}[section]
\numberwithin{equation}{section}
\newcommand{\dom}{\mathop\mathrm{dom}\nolimits}
\renewcommand{\Im}{\mathop\mathrm{Im}\nolimits}
\begin{document}

\title[Atypical bifurcation for periodic solutions of $\phi$-Laplacian systems]{Atypical bifurcation for periodic solutions \\of $\phi$-Laplacian systems}

\author[P.~Benevieri]{Pierluigi Benevieri}

\address{
Instituto de Matem\'{a}tica e Estat\`{i}stica, Universidade de S\~{a}o Paulo\\
Rua do Mat\~{a}o 1010, S\~{a}o Paulo, SP - CEP 05508-090, Brazil}

\email{pluigi@ime.usp.br}

\author[G.~Feltrin]{Guglielmo Feltrin}

\address{
Department of Mathematics, Computer Science and Physics, University of Udine\\
Via delle Scienze 206, 33100 Udine, Italy}

\email{guglielmo.feltrin@uniud.it}

\thanks{Work written under the auspices of the Grup\-po Na\-zio\-na\-le per l'Anali\-si Ma\-te\-ma\-ti\-ca, la Pro\-ba\-bi\-li\-t\`{a} e le lo\-ro Appli\-ca\-zio\-ni (GNAMPA) of the Isti\-tu\-to Na\-zio\-na\-le di Al\-ta Ma\-te\-ma\-ti\-ca (INdAM). The first author is partially supported by GNAMPA. The second author acknowledges the support of INdAM-GNAMPA project ``Analisi qualitativa di problemi differenziali non lineari'' and PRIN Project 20227HX33Z ``Pattern formation in nonlinear phenomena''.
\\
\textbf{Preprint}} 

\subjclass{34B15, 34C23, 34C25, 47H11, 47J05.}

\keywords{Periodic solutions, $\phi$-Laplacian operator, atypical bifurcation, degree theory.}

\date{}

\dedicatory{Dedicated to Professor Maria Patrizia Pera on the occasion of her 70th birthday}

\begin{abstract}
In this paper, we study the $T$-periodic solutions of the parameter-dependent $\phi$-Laplacian equation
\begin{equation*}
(\phi(x'))'=F(\lambda,t,x,x').
\end{equation*}
Based on the topological degree theory, we present some atypical bifurcation results in the sense of Prodi--Ambrosetti, i.e., bifurcation of $T$-periodic solutions from $\lambda=0$. Finally, we propose some applications to Li\'enard-type equations.
\end{abstract}

\maketitle

\section{Introduction and main result}\label{section-1}

The investigation of periodic boundary value problems of the form
\begin{equation}\label{eq-Mawhin}
\begin{cases}
\, x'=F(t,x),
\\
\, x(0)=x(T),
\end{cases}
\end{equation}
where $F\colon \mathopen{[}0,T\mathclose{]}\times \mathbb{R} \to \mathbb{R}$ is a given function, is a very classical topic in nonlinear analysis and, looking in the literature, it has received a lot of attention and has been tackled with several different techniques. 
Definitely, one of the most used and effective tools to the analysis of \eqref{eq-Mawhin} is the topological degree theory. 
With this respect, the continuation theorem by Mawhin is a milestones for the existence of periodic solutions, see \cite{CMZ-90,CMZ-92,Ma-69,Ma-93}. More precisely, Mawhin and collaborators prove the existence of solutions of \eqref{eq-Mawhin} exploiting the homotopy invariance property of the so-called \textit{coincidence degree}, introduced in \cite{Ma-72} (see also \cite{GaMa-77,Ma-79,Ma-93}) in order to extend the previous Leray--Schauder continuation principle \cite{Ma-99}.

In more details, Mawhin considers the parameter-dependent equation $x'=\vartheta F(t,x)$, with $\vartheta \in\mathopen{(}0,1\mathclose{]}$, to reduce the search of $T$-periodic solutions of \eqref{eq-Mawhin} to the study of the zeros of the finite-dimensional function $s \mapsto \int_{0}^{T} F(t,s) \,\mathrm{d}t$ in $\mathbb{R}$, cf.~\cite{Ma-69,Ma-93}. 
With a similar topological approach, in \cite{CMZ-92} the authors prove an analogous existence result introducing a homotopy with an autonomous field, thus looking for zeros of a finite-dimensional function, as before (see also \cite{BaMA-91}). 
Both results are also valid in the vector case and, in the last decades, a great number of applications to second-order periodic differential equations of the form
\begin{equation*}
x'' + f(t,x,x') = 0
\end{equation*}
has been provided. We refer the reader to \cite{Ma-23,Ma-05,MRZ-00} and the references therein.

More recently, several papers have considered the same topological approach dealing with periodic differential equations of the form
\begin{equation}\label{eq-intro-phi-L}
(\phi(x'))' + f(t,x,x') = 0,
\end{equation}
where $\phi$ is a homeomorphism. A significant contribution in this direction has been proposed by Man\'{a}sevich and Mawhin in \cite{MaMa-98}, where they extend the above mentioned continuation theorems obtaining periodic solutions to \eqref{eq-intro-phi-L} by an application of the Leray--Schauder degree theory. 
This result has been generalized in \cite{FeZa-17}, where some technical hypotheses on $\phi$ have been removed, via a slightly different point of view dealing with systems of first-order equations. We also mention \cite{GHMMT-24} for further recent investigations in this direction, and refer the reader to \cite{BdOdM-06,BeMa-08,BeMa-09} and the references therein, for existence results in the context of singular $\phi$-Laplacian equations, in particular the Minkowski curvature equations and Euclidean mean curvature equations.

In all the above mentioned contributions, a real parameter is used to reduce the original problem to a simpler one, exploiting the homotopy invariance property of the degree. 
The aim of the present paper is to propose a different point of view of the problem, taking into account the dependence of the solutions on the parameter. Accordingly, we modify system \eqref{eq-Mawhin} introducing a real parameter $\lambda$ as follows
\begin{equation}\label{eq-Mawhin-parameter}
\begin{cases}
\, x'=\lambda F(t,x),
\\
\, x(0)=x(T),
\end{cases}
\end{equation}
where $F\colon \mathopen{[}0,T\mathclose{]}\times \mathbb{R}^{n} \to \mathbb{R}^{n}$.
Observe that the $n$-dimensional space of real constant functions is the set of solutions of \eqref{eq-Mawhin-parameter} when $\lambda=0$. A quite natural question is whether one (or more) of such constant solutions is a bifurcation point of solutions with $\lambda\neq0$. 
Another problem concerns the topological behaviour of the sets of the solutions with $\lambda\neq0$ that emanate from a bifurcation point.

In this spirit, in this paper we consider the parametrized $T$-periodic boundary value problem
\begin{equation}\label{mainpb}
\begin{cases}
\, (\phi(x'))'=F(\lambda,t,x,x'),
\\
\, x(0)=x(T),\quad x'(0)=x'(T),
\end{cases}
\end{equation}
with $\lambda\in\mathbb{R}$, assuming the following conditions:
\begin{enumerate}[leftmargin=26pt,labelsep=6pt,label=\textup{$(\textsc{h}_{1})$}]
\item $\phi \colon\mathbb{R}^n\to \mathbb{R}^n$ is a homeomorphism with $\phi(\mathbb{R}^n)=\mathbb{R}^n$ and $\phi(0)=0$;
\label{hp-H1}
\end{enumerate}
\begin{enumerate}[leftmargin=26pt,labelsep=6pt,label=\textup{$(\textsc{h}_{2})$}]
\item $F\colon\mathbb{R}\times\mathopen{[}0,T\mathclose{]}\times\mathbb{R}^n\times\mathbb{R}^n\to\mathbb{R}^n$ is a Carath\'eodory function, that is,
\begin{itemize}[leftmargin=16pt,labelsep=6pt]
\item for almost every $t\in \mathopen{[}0,T\mathclose{]}$, $F(\cdot, t, \cdot,\cdot)$ is continuous,
\item for every $(\lambda,x,y) \in \mathbb{R}\times \mathbb{R}^n\times\mathbb{R}^n$, $F(\lambda,\cdot, x,y)$ is measurable,
\item for every $\rho>0$ and every compact interval $I$, there exists a map $g\in L^1(\mathopen{[}0,T\mathclose{]},[0,+\infty))$ such that, for almost every $t\in \mathopen{[}0,T\mathclose{]}$ and every $(\lambda,x,y) \in I\times \mathbb{R}^n\times\mathbb{R}^n$, with $\Vert x\Vert\leq \rho$ and  $\| y\|\leq \rho$, we have $\| F(\lambda,t,x,y)\|\leq g(t)$;
\end{itemize}
\label{hp-H2}
\end{enumerate} 
\begin{enumerate}[leftmargin=26pt,labelsep=6pt,label=\textup{$(\textsc{h}_{3})$}]
\item $F(0,t,x,y)=f_0(x,y)$, that is, $F(0,\cdot, \cdot, \cdot)$ is an autonomous continuous vector field. 
\label{hp-H3}
\end{enumerate}
A \textit{solution} of system \eqref{mainpb}, for a given $\lambda\in\mathbb{R}$, is a continuously differentiable map $x\colon \mathopen{[}0,T\mathclose{]}\to \mathbb{R}^n$ having absolutely continuous derivative and verifying the first equality in \eqref{mainpb}, for a.e.~$t\in \mathopen{[}0,T\mathclose{]}$, as well as the periodic boundary conditions. The pair $(\lambda,x)$ will be called \textit{solution pair} of \eqref{mainpb}.

In this paper, we establish a general bifurcation theorem (Theorem~\ref{genbifthm} below and subsequent corollaries) for system \eqref{mainpb}. We will study the so-called \textit{atypical bifurcation}, in the sense of Prodi--Ambrosetti \cite{PrAm-73}. 
Let us outline the key differences between the atypical bifurcation and the classical bifurcation theory.

Given two real Banach space $X$ and $Y$ and a continuous function $\mathcal{F} \colon \mathbb{R}\times X \to Y$, the bifurcation theory typically investigates problems of the form
\begin{equation}\label{eq-intro-biforcation}
\mathcal{F}(\lambda,x)=0, \quad (\lambda,x)\in \mathbb{R}\times X.
\end{equation}
Assume that $\mathcal{F}(\lambda,0)=0$ for every $\lambda\in\mathbb{R}$, i.e., $(\lambda,0)$ is a \textit{trivial} solution for every $\lambda\in\mathbb{R}$. The value $\lambda_{0}$ is called \textit{bifurcation point} if every neighbourhood of $(\lambda_{0},0)$ in $\mathbb{R}\times X$ contains solutions $(\lambda,x)$ of \eqref{eq-intro-biforcation} with $x\neq 0$. 
By the implicit function theorem, if $\mathcal{F}$ is Fr\'echet differentiable at $x=0$ and $\lambda_{0}$ is a bifurcation point, then the derivative with respect to $x$ of $\mathcal{F}$ in $(\lambda_{0},0)$ is not invertible. In particular, if \eqref{eq-intro-biforcation} is of the form
\begin{equation}\label{eq-intro-biforcation-2}
x-\lambda \mathcal{G}(x)=0, \quad (\lambda,x)\in \mathbb{R}\times X,
\end{equation}
where $\mathcal{G}$ is Fr\'echet differentiable and completely continuous, then every bifurcation point  is a characteristic value of the linearized problem $x=\lambda\mathrm{D}\mathcal{G}(0) x$, i.e., $1/\lambda$ is an eigenvalue of the linear compact operator $\mathrm{D}\mathcal{G}(0)$.

A seminal result in bifurcation theory, proposed by M.~A.~Krasnosel'ski\u{\i} in \cite{Kr-64} in the sixties, states that if $\lambda_0$ is a characteristic value of odd algebraic multiplicity of $\mathrm{D}\mathcal{G}(0)$, then $\lambda_0$ is a bifurcation point for \eqref{eq-intro-biforcation-2}.
In 1971, P.~Rabinowitz extended Krasnosel'ski\u{\i}'s result in \cite{Ra-71}, giving a global version of it. More precisely, given $\lambda_0$ as above, then $(\lambda_{0},0)$ belongs to the closure of a connected set $\Gamma$ of nontrivial solutions, which either is unbounded or contains a point $(\lambda_{1},0)$ with $\lambda_{1}\neq \lambda_{0}$.
These two fundamental results have initiated a line of research that has been extensively pursued in the subsequent years. Among a vast literature, the reader can see e.g.~\cite{ChHa-82,De-85,Fe-08,Ki-04,MaWa-05}.

Around the same time as the aforementioned work by Rabinowitz, A.~Ambrosetti and G.~Prodi published a milestone monograph in nonlinear analysis (see \cite{PrAm-73} for the original Italian edition, and \cite{AmPr-93} for the 1993 English edition). In the chapter concerning the bifurcation, the authors devoted a section of special bifurcation problems, which they called ``atypical''. This section examines specific examples of nonlinear differential problems that can be formulated as nonlinear functional equations of the form 
\begin{equation}\label{AmPr-atypical}
Lx-N(\lambda,x)=0, \quad (\lambda,x)\in\mathbb{R}\times X,
\end{equation}
in which $L\colon X\to X$ is a linear operator and $N$ satisfies $N(0,x)=0$, for all $x\in X$. 
Notice that, if $x\in \ker L$, then $(0,x)$ is a solution of \eqref{AmPr-atypical}; in other words $\{0\}\times\ker L$ has the same role of the line $\{(\lambda,0) \colon \lambda\in\mathbb{R}\}$ in the classical bifurcation theory discussed above.
In the context of \eqref{AmPr-atypical}, the bifurcation problem consists in looking for the existence of a point $x_0\in \ker L$ such that any neighbourhood of $(0,x_0)$ in $\mathbb{R}\times X$ contains a solution $(\lambda,x)$ of \eqref{AmPr-atypical} with $\lambda\neq 0$. 

Keeping the use of the terminology ``atypical bifurcation'' (sometimes also called ``cobifurcation''), this kind of problems received important contributions by M.~Furi and M.~P.~Pera \cite{Fu-83,FuPe-83}; we also mention more recent papers \cite{BFMP-05,BeZe-17,CaPeSp-pp,CaPeSp-24,FuPe-97,FuPe-97umi,FuPeSp-10}, also dealing with differential problems on manifolds.

Returning to our periodic problem \eqref{mainpb}, the following terminology will henceforth be employed, in accordance with the context of atypical bifurcation theory. A solution pair of \eqref{mainpb} of the type $(0,x)$ with $x$ constant is called \textit{trivial}. Clearly, a trivial solution pair occurs if and only if $f_0(x,0)=0$.
Given the Banach space $\mathcal{C}^1(\mathopen{[}0,T\mathclose{]},\mathbb{R}^n)$, endowed with the usual norm $\Vert \cdot \Vert_{\mathcal{C}^{1}}$, we denote by $\mathcal{C}^1_T$ the closed subspace of maps $x$ satisfying $x(0)=x(T)$ and $x'(0)=x'(T)$.
We say that a  trivial solution pair $(0,\bar{x})$ is a \textit{bifurcation point} for problem \eqref{mainpb} if any neighbourhood  of $(0,\bar{x})$ in $\mathbb{R} \times \mathcal{C}^1_T$ contains nontrivial solution pairs, that is, pairs of the type $(\lambda,x)$ with $\lambda\neq0$ or $(0,x)$ with $x$ nonconstant.

Our main result is the following.

\begin{theorem}\label{genbifthm}
Assume that \ref{hp-H1}, \ref{hp-H2} and \ref{hp-H3} hold true. Let $\Omega$ be an open subset of $\mathbb{R} \times  \mathcal{C}^1_T$ and suppose that the Brouwer degree
\begin{equation}
\label{gradononzeroo}
\mathrm{deg}_{\mathrm{B}}(f_0(\cdot,0),\widetilde\Omega_0,0)\neq 0,
\end{equation}
where $\widetilde\Omega_0\coloneqq\{x\in \mathbb{R}^n \colon (0,x)\in \Omega\}$ (with the natural identification between $\mathbb{R}^n$ and the subspace of constant maps in $\mathcal{C}^1_T$).
Then, there exists a connected set $\Gamma \subseteq \Omega$ of nontrivial solution pairs of \eqref{mainpb}, such that the closure of $\Gamma$ in $\Omega$ is not compact and intersects $\{0\} \times \widetilde\Omega_0$. 
\end{theorem}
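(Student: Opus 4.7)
The strategy is to reformulate \eqref{mainpb} as a Leray--Schauder fixed-point problem on $\mathcal{C}^1_T$ and then combine a Whyburn-type separation argument with a Man\'asevich--Mawhin reduction of the degree, in the spirit of the global bifurcation approach of Furi--Pera \cite{FuPe-83}. Under \ref{hp-H1}--\ref{hp-H2}, the framework developed in \cite{MaMa-98,FeZa-17} yields a completely continuous operator $\mathcal{N}\colon\mathbb{R}\times\mathcal{C}^1_T\to\mathcal{C}^1_T$ whose fixed points are exactly the solution pairs of \eqref{mainpb}. Set $Z:=\{\bar x\in\widetilde\Omega_0:f_0(\bar x,0)=0\}$; the admissibility of \eqref{gradononzeroo} forces $Z$ to be non-empty and compact, and the set of trivial solution pairs in $\Omega$ is $K:=\{0\}\times Z$.

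Argue by contradiction: assume no such $\Gamma$ exists. Then every connected component of the nontrivial solution pairs in $\Omega$ whose closure meets $K$ has compact closure in $\Omega$, and a Whyburn-type separation lemma (as in \cite{FuPe-83}) produces a bounded open set $V\subset\mathbb{R}\times\mathcal{C}^1_T$ with $K\subset V$, $\overline V\subset\Omega$, and $\mathcal{N}$ fixed-point-free on $\partial V$. Let $\pi$ denote the projection $\mathbb{R}\times\mathcal{C}^1_T\to\mathbb{R}$ and $V_\lambda:=\pi^{-1}(\lambda)\cap V$; since a fixed point of $\mathcal{N}(\lambda,\cdot)$ on the boundary $\partial V_\lambda$ in $\mathcal{C}^1_T$ would come from a point of $\partial V$, the Leray--Schauder degree $d(\lambda):=\mathrm{deg}_{\mathrm{LS}}(I-\mathcal{N}(\lambda,\cdot),V_\lambda,0)$ is well defined. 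A routine excision-plus-homotopy-in-$\lambda$ argument, leveraging the compactness of $\mathcal{N}$, proves that $d$ is locally constant on the open bounded set $\pi(V)\subset\mathbb{R}$. On the connected component $I_0=(a,b)\subset\pi(V)$ containing $0$, as $\lambda\to b^-$ (and similarly $\lambda\to a^+$) any sequence of fixed points in $V_\lambda$ would accumulate at a solution pair in $\partial V$ (since $V_a=V_b=\emptyset$), which is excluded by the choice of $V$. Hence $d$ vanishes near the endpoints of $I_0$, and by local constancy $d(0)=0$.

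The main technical obstacle is then the computation of $d(0)$ and its identification, up to sign, with $\mathrm{deg}_{\mathrm{B}}(f_0(\cdot,0),\widetilde\Omega_0,0)$. At $\lambda=0$ the equation $(\phi(x'))'=f_0(x,x')$ is autonomous but generally non-degenerate, and may admit $T$-periodic orbits that are not constants. The plan is to perform an admissible Man\'asevich--Mawhin type homotopy (as in \cite{MaMa-98}, adapted to the framework of \cite{FeZa-17}) --- for instance, through the family $(\phi(x'))'=\sigma f_0(x,\sigma x')$, $\sigma\in[0,1]$ --- that reduces the infinite-dimensional Leray--Schauder degree on $V_0$ to the Brouwer degree of the averaged field on the $n$-dimensional subspace of constants, which in the autonomous case coincides with $f_0(\cdot,0)$ itself. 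The delicate point is the admissibility of this homotopy: one must rule out solutions on $\partial V_0$ throughout the deformation, which requires possibly shrinking $V$ further around $K$ in the preceding step (in particular, excluding nonconstant autonomous $T$-periodic orbits of $(\phi(x'))'=f_0(x,x')$ from $V_0$) and exploiting the a priori $\mathcal{C}^1$-estimates granted by \ref{hp-H2}. The resulting identity $d(0)=\pm\,\mathrm{deg}_{\mathrm{B}}(f_0(\cdot,0),\widetilde\Omega_0,0)\ne 0$ contradicts the conclusion of the previous paragraph and yields the claim.
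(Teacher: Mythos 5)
Your overall architecture mirrors the paper's: reformulate as a compact fixed-point problem, run a Whyburn-type separation under the contradiction hypothesis, use homotopy invariance in $\lambda$ to force the degree at $\lambda=0$ to vanish, and contradict this by reducing that degree to $\mathrm{deg}_{\mathrm{B}}(f_0(\cdot,0),\widetilde\Omega_0,0)$. The main presentational difference is that you pose the problem as a Leray--Schauder fixed point directly in $\mathcal{C}^1_T$ via Man\'asevich--Mawhin, whereas the paper passes to the first-order system in $\mathcal{C}\times\mathcal{C}$ and uses coincidence degree; your choice would in fact spare the paper the delicate step of proving that $\Sigma$ is open in $A$ (since $\Omega$ lives in the $\mathcal{C}^1$-topology while $A$ is handled in $\mathcal{C}\times\mathcal{C}$) and the translation from $\Xi$ to $\Gamma$ at the end. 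That is a genuinely reasonable alternative route, provided one cites the complete continuity of the Man\'asevich--Mawhin operator on $\mathcal{C}^1_T$.

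The gap is in the final reduction $d(0)=\pm\,\mathrm{deg}_{\mathrm{B}}(f_0(\cdot,0),\widetilde\Omega_0,0)$. You propose to guarantee admissibility of a Man\'asevich--Mawhin-type homotopy by ``shrinking $V$ further around $K$ ... excluding nonconstant autonomous $T$-periodic orbits of $(\phi(x'))'=f_0(x,x')$ from $V_0$.'' This is not available to you: the Whyburn argument hands you a compact $C$ (open-and-closed in $\Sigma$) and any admissible $V$ must satisfy $V\cap\Sigma=C$, so $V_0\supseteq C_0$, and $C_0$ may perfectly well contain nonconstant autonomous orbits --- there is no freedom to shrink $V_0$ below $C_0$. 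Fortunately the exclusion is also unnecessary. The paper's Proposition~\ref{prop_reduction_property} establishes the reduction $\vert\mathrm{D}_{\mathcal L}(\mathcal L-\mathcal M_0,\mathcal O)\vert=\vert\mathrm{deg}_{\mathrm{B}}(f_0(\cdot,0),\mathcal O_{1,0}\cap\mathbb R^n,0)\vert$ assuming only that the solution set $\mathcal S_0$ at $\lambda=0$ is compact (which is $C_0$), via the Capietto--Mawhin--Zanolin continuation theorem followed by a finite-dimensional homotopy $H(\vartheta,x_1,x_2)=(\phi^{-1}(x_2),f_0(x_1,\vartheta x_2))$ on $\ker\mathcal L$ and the composition formula of Proposition~\ref{prop_composition_property}; no assumption on the constancy of the autonomous solutions is needed. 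Relatedly, the homotopy you propose, $(\phi(x'))'=\sigma f_0(x,\sigma x')$, degenerates at $\sigma=0$ to $(\phi(x'))'=0$, whose $T$-periodic solution set is \emph{all} constants in $V_0$, so the Leray--Schauder degree there does not by itself equal the Brouwer degree of $f_0(\cdot,0)$; the reduction must instead proceed through an averaged-field homotopy (as in CMZ and the paper). As written, the last step of your argument therefore does not close.
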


The proof of Theorem~\ref{genbifthm} will be provided by a topological approach, specifically based on the coincidence degree and, in particular, its homotopy invariance property. 
It is important to emphasise that variational methods are not suitable in this context, where the second-order problem \eqref{mainpb} depends on the first derivative $x'$. Consequently, topological methods and degree conditions are typically employed to prove the emergence of solution branches. For results of this type in different frameworks, see for instance \cite{BFMP-05,FuPe-97}.

\bigskip

The paper is organised as follows. Section~\ref{section-2} deals with some topological preliminaries: the definition and the basic properties of the coincidence degree are recalled. In Section~\ref{section-3}, we first introduce an abstract problem in Banach spaces, equivalent to \eqref{mainpb}, and then we propose a finite-dimensional reduction theorem which will be crucial in the proof of Theorem~\ref{genbifthm}, which is given in Section~\ref{section-4} together with some corollaries. 
In Section~\ref{section-5}, we show some applications of our bifurcation result in the context of Li\'enard-type equations. We refer the reader to the classical contributions \cite{Ha-60,Kn-71,Ma-72jmaa,Re-75}, dealing with the linear second-order operator, and to the more recent ones \cite{BeMa-07,FeZa-21,Ma-13}, concerning nonlinear differential operators and which inspired our examples.
The paper ends with Appendix~\ref{appendix-A}, where we discuss some topological questions arisen along the paper and which can be of independent interest.

\section{Preliminaries on degree theory}\label{section-2}

For the reader's convenience, we first summarize the main facts concerning Mawhin's coincidence degree theory; for a comprehensive exposition, see \cite{GaMa-77,Ma-72,Ma-79,Ma-93}. Next, we recall a property of the Brouwer degree which will be crucial in the proof of the main theorem.

\subsection{The coincidence degree}\label{section-2.1}

Let $X$ and $Y$ be real Banach spaces and let $\mathcal{L} \colon \mathrm{dom}(\mathcal{L})\subseteq X\to Y$ be a (not necessarily bounded) Fredholm linear operator of index zero (that is, $\mathrm{dim}(\ker \mathcal{L})=\mathrm{codim}(\Im\mathcal{L})$ is finite and $\Im \mathcal{L}$ is closed in $Y$).
Let  $P\colon X\to X$ and $Q\colon Y\to Y$ be two continuous projections such that $\Im P=\ker \mathcal{L}$ and $\ker Q=\Im \mathcal{L}$. Since $\Im Q$ and $\ker \mathcal{L} $ have the same finite dimension, there exists an isomorphism $J\colon \Im Q\to \ker \mathcal{L}$. It is readily seen that $\mathcal{L}|_{\mathrm{dom}(\mathcal{L})\cap \ker P}\colon \mathrm{dom}(\mathcal{L})\cap \ker P\to \Im \mathcal{L}$ is bijective, thus it has an algebraic inverse $K_P$.

Let $W$ be an open subset of $X$. A continuous map $\mathcal{N}\colon W\to Y$ is called \textit{$\mathcal{L}$-completely continuous} if $\mathcal{N}(B)$ is bounded in $Y$ for each bounded $B\subseteq W$ and the map $K_P(\mathrm{Id}_Y-Q)\mathcal{N}\colon  W\to X$ is completely continuous, where $\mathrm{Id}_Y$ is the identity on $Y$.

Given an open subset $\mathcal{O}$ of $W$, the pair $(\mathcal{L} - \mathcal{N}, \mathcal{O})$ is said to be \textit{admissible} for the coincidence degree if the set of solutions in $\mathcal{O}\cap \dom( \mathcal{L})$  of 
\begin{equation}\label{functional equation}
\mathcal{L} u = \mathcal{N} (u)
\end{equation}
is compact. It is not difficult to see that \eqref{functional equation} is equivalent to 
\begin{equation}\label{def-Phi-degree}
u= \Phi(u),
\end{equation}
where $\Phi\colon W\to X$ is given by 
\begin{equation*}
\Phi(u) = Pu + JQ\mathcal{N}(u) + K_P(\mathrm{Id}_Y - Q)\mathcal{N}(u).
\end{equation*}
Notice that $\Phi$ is completely continuous since $\mathcal{N}$ is $\mathcal{L}$-completely continuous. 
In addition, $\{u\in \mathcal{O} \colon u= \Phi(u)\}$ is compact since the set of solutions of \eqref{functional equation} is assumed to be compact. Consequently, the Leray--Schauder degree $\mathrm{deg}_{\mathrm{LS}}(\mathrm{Id}_X-\Phi,\mathcal{O})$ is well defined. Therefore, the coincidence degree of $(\mathcal{L} - \mathcal{N}, \mathcal{O})$ is defined as the integer
\begin{equation*}
\mathrm{D}_{\mathcal{L}}(\mathcal{L} - \mathcal{N}, \mathcal{O})\coloneqq\mathrm{deg}_{\mathrm{LS}}(\mathrm{Id}_X-\Phi,\mathcal{O},0).
\end{equation*}

\begin{remark}\label{J determina il segno}
The definition of $\Phi$ depends, among other things, on $J$. Roughly speaking, $J$ determines a choice of orientations on $\ker \mathcal{L}$ and $\Im Q$, up to an inversion of both of them, in the sense that $J$ is implicitly assumed to be orientation preserving. If $J'$ were another isomorphism, one would obtain the same value of the degree, up to the sign.
\hfill$\lhd$
\end{remark}

The coincidence degree satisfies the classical properties in the topological degree theory. We list below some of them, with particular attention to those used in this paper.

\begin{proposition}
\label{degree properties}
The coincidence degree satisfies the following properties.

\begin{itemize}
\item Normalization. Given an admissible pair $(\mathcal{L} - \mathcal{N}, \mathcal{O})$,  assume that $\mathcal{L}$ is injective and $\mathcal{N}$ is constant and equal to $b$. If $b\in \mathcal{L} (\mathcal{O} \cap \dom(\mathcal{L}))$, then 
\begin{equation*}
|\mathrm{D}_{\mathcal{L}}(\mathcal{L} - b, \mathcal{O})| = 1.
\end{equation*}

\item Additivity. If $(\mathcal{L} - \mathcal{N}, \mathcal{O})$ is  admissible and $\mathcal{O}_1$, $\mathcal{O}_2$ are two disjoint open subsets of $\mathcal{O}$ such that $(\mathcal{L} - \mathcal{N})^{-1}(0)\cap \mathcal{O}  \subseteq \mathcal{O}_1 \cup \mathcal{O}_2$, then
\begin{equation*}
\mathrm{D}_{\mathcal{L}}(\mathcal{L} - \mathcal{N}, \mathcal{O}) = \mathrm{D}_{\mathcal{L}}(\mathcal{L} - \mathcal{N}, \mathcal{O}_1) +\mathrm{D}_{\mathcal{L}}(\mathcal{L} - \mathcal{N}, \mathcal{O}_2).
\end{equation*}

\item Excision. If $(\mathcal{L} - \mathcal{N}, \mathcal{O})$ is  admissible and $\mathcal{V}$ is an open subset of $\mathcal{O}$ containing $(\mathcal{L} - \mathcal{N})^{-1}(0)\cap \mathcal{O}$, then
\begin{equation*}
\mathrm{D}_{\mathcal{L}}(\mathcal{L} - \mathcal{N}, \mathcal{O}) = \mathrm{D}_{\mathcal{L}}(\mathcal{L} - \mathcal{N}, \mathcal{V}) .
\end{equation*}

\item Homotopy invariance. Given an open subset $\Omega$ of $\mathopen{[}0,1\mathclose{]}\times X$, let $\mathcal{H}\colon \Omega \cap (\mathopen{[}0,1\mathclose{]}\times \dom(\mathcal{L})) \to Y$ be a homotopy of the form
\begin{equation*}
\mathcal{H} (\lambda,u)=\mathcal{L} u - \widetilde{\mathcal{N}} (\lambda,u),
\end{equation*}
and assume that $\widetilde{\mathcal{N}} \colon \Omega \to Y$ is $\mathcal{L}$-completely continuous (i.e., $\widetilde{\mathcal{N}}(B)$ is bounded and $K_P(\mathrm{Id}_Y-Q)\widetilde{\mathcal{N}}(B)$ is relatively compact for every bounded set $B\subseteq \Omega$).
If the set 
\begin{equation*}
\bigl{\{}u\in X\colon \mathcal{H}(\lambda,u)=0 \text{ for some $\lambda\in \mathopen{[}0,1\mathclose{]}$}\bigr{\}}
\end{equation*}
is compact, then $\mathrm{D}_{\mathcal{L}}(\mathcal{H}_\lambda,\Omega_\lambda)$ is well defined and is constant in $\mathopen{[}0,1\mathclose{]}$, where $\mathcal{H}_\lambda\coloneqq \mathcal{H}(\lambda,\cdot)$ and $\Omega_\lambda\coloneqq\{u\in X \colon (\lambda,u)\in \Omega\}$, for every $\lambda\in \mathopen{[}0,1\mathclose{]}$.

\item Existence. If $(\mathcal{L} - \mathcal{N}, \mathcal{O})$ is admissible and
\begin{equation*}
\mathrm{D}_{\mathcal{L}}(\mathcal{L} - \mathcal{N}, \mathcal  O)\neq 0,
\end{equation*}
then the equation $\mathcal{L} u = \mathcal{N} (u)$ has at least one solution in $\mathcal  O$.
\end{itemize}
\end{proposition}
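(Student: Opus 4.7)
My plan is to deduce every assertion of Proposition~\ref{degree properties} from the corresponding property of the Leray--Schauder degree, using the defining identity
\begin{equation*}
\mathrm{D}_{\mathcal{L}}(\mathcal{L} - \mathcal{N}, \mathcal{O})=\mathrm{deg}_{\mathrm{LS}}(\mathrm{Id}_X-\Phi,\mathcal{O},0)
\end{equation*}
together with the equivalence between \eqref{functional equation} and \eqref{def-Phi-degree}. The key structural observation I would lean on throughout is that, under the $\mathcal{L}$-complete continuity of $\mathcal{N}$ (or of any $\mathcal{L}$-completely continuous parametrized version $\widetilde{\mathcal{N}}$), the map $\Phi$ is a compact perturbation of the identity on bounded subsets of $W$: the term $K_P(\mathrm{Id}_Y-Q)\mathcal{N}$ is compact by the very definition, while $Pu + JQ\mathcal{N}(u)$ takes values in the finite-dimensional subspace $\ker\mathcal{L}$ and is continuous and bounded on bounded sets.

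For normalization, injectivity of $\mathcal{L}$ combined with the Fredholm index-zero hypothesis forces $\ker\mathcal{L}=\{0\}$ and $\Im\mathcal{L}=Y$, whence necessarily $P=0$, $Q=0$, and $K_P=\mathcal{L}^{-1}$. Thus $\Phi$ reduces to the constant map $u\mapsto\mathcal{L}^{-1}b$, which lies in $\mathcal{O}$ by the hypothesis $b\in\mathcal{L}(\mathcal{O}\cap\dom\mathcal{L})$, and the trivial Leray--Schauder computation yields $\mathrm{deg}_{\mathrm{LS}}(\mathrm{Id}_X-\mathcal{L}^{-1}b,\mathcal{O},0)=1$; the absolute value in the statement accommodates the orientation ambiguity of Remark~\ref{J determina il segno}. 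Existence is then immediate: if $\mathrm{D}_{\mathcal{L}}(\mathcal{L}-\mathcal{N},\mathcal{O})\neq 0$, then $\mathrm{deg}_{\mathrm{LS}}(\mathrm{Id}_X-\Phi,\mathcal{O},0)\neq 0$, so $\Phi$ admits a fixed point in $\mathcal{O}$, which by the above equivalence solves $\mathcal{L}u=\mathcal{N}(u)$.

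Additivity and excision follow essentially tautologically, since the solution set of $\mathcal{L}u=\mathcal{N}(u)$ in $\mathcal{O}$ coincides with the fixed-point set $\{u\in\mathcal{O} : u=\Phi(u)\}$. The covering hypothesis on $\mathcal{O}_1\cup\mathcal{O}_2$, respectively the containment hypothesis on $\mathcal{V}$, therefore transfers verbatim to the Leray--Schauder framework for $\mathrm{Id}_X-\Phi$, and the corresponding identities are inherited directly from $\mathrm{deg}_{\mathrm{LS}}$.

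The one step requiring genuine care is homotopy invariance. I would introduce
\begin{equation*}
\widetilde{\Phi}(\lambda,u) := Pu + JQ\widetilde{\mathcal{N}}(\lambda,u) + K_P(\mathrm{Id}_Y-Q)\widetilde{\mathcal{N}}(\lambda,u),\qquad (\lambda,u)\in\Omega,
\end{equation*}
so that $\mathcal{H}(\lambda,u)=0$ if and only if $u=\widetilde{\Phi}(\lambda,u)$, and then verify the two hypotheses of the parametric Leray--Schauder degree on an open subset of $\mathopen{[}0,1\mathclose{]}\times X$: first, that $\widetilde{\Phi}$ is a compact homotopy on bounded subsets of $\Omega$, which follows from the $\mathcal{L}$-complete continuity of $\widetilde{\mathcal{N}}$ combined with the finite-dimensional/compact splitting recalled above; second, that the set of fixed points of $\widetilde{\Phi}(\lambda,\cdot)$ as $\lambda$ varies in $\mathopen{[}0,1\mathclose{]}$ is compact, which is precisely the admissibility assumption in the statement. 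The main technical obstacle I foresee is reconciling the openness of $\Omega$ in $\mathopen{[}0,1\mathclose{]}\times X$ with the slicewise fixed-point framework, so that the homotopy invariance of $\mathrm{deg}_{\mathrm{LS}}$ applied to $\mathrm{Id}_X-\widetilde{\Phi}(\lambda,\cdot)$ on $\Omega_\lambda$ delivers the desired constancy of $\mathrm{D}_{\mathcal{L}}(\mathcal{H}_\lambda,\Omega_\lambda)$ on $\mathopen{[}0,1\mathclose{]}$.
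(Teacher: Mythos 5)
Your reduction of every property to the corresponding Leray--Schauder statement via the defining identity $\mathrm{D}_{\mathcal{L}}(\mathcal{L}-\mathcal{N},\mathcal{O})=\mathrm{deg}_{\mathrm{LS}}(\mathrm{Id}_X-\Phi,\mathcal{O},0)$ is exactly the route the construction in Section~\ref{section-2.1} is set up for; the paper itself states this proposition without proof, deferring to \cite{GaMa-77,Ma-79,Ma-93}, where the same argument is carried out, and your treatment of normalization ($P=0$, $Q=0$, $K_P=\mathcal{L}^{-1}$, so $\Phi\equiv\mathcal{L}^{-1}b$), additivity, excision and existence is correct. The one step you flag as an obstacle is resolved by invoking the Leray--Schauder homotopy invariance in its generalized form for open subsets of $\mathopen{[}0,1\mathclose{]}\times X$ (not merely cylinders), applied to $(\lambda,u)\mapsto u-\widetilde{\Phi}(\lambda,u)$, which is legitimate precisely because $\widetilde{\Phi}$ is completely continuous and the stated compactness hypothesis keeps the solution set away from the boundary of each slice $\Omega_\lambda$.
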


\subsection{A composition property for the Brouwer degree}\label{section-2.2}

The construction and the properties of the Brouwer degree are very well known (cf.~\cite{FoGa-95,Ll-78}). However, we find useful to recall a particular case of the so-called \textit{composition property} of the degree. 
This will help to better understand a specific step in the final part of the proof of Proposition~\ref{prop_reduction_property} given in Section~\ref{section-3}.
For a general (and more complicated) statement, the reader can see e.g.~\cite[Theorem~2.10]{FoGa-95}.

\begin{proposition}\label{prop_composition_property}
Let $\mathcal{O}_1$ and $\mathcal{O}_2$ be two open subsets of $\mathbb{R}^n$ and $f\colon\mathcal{O}_1 \times \mathcal{O}_2 \to \mathbb{R}^{2n}$ a continuous function. Assume that $f$ is of the form $f(x_1,x_2)=(f_1(x_2),f_2(x_1))$. Suppose that the set of solutions of $f(x_1,x_2)=0$ is compact. In addition, denote by $g\colon\mathcal{O}_2 \times \mathcal{O}_1 \to \mathbb{R}^{2n}$ the map defined as $g(x_2,x_1)=(f_1(x_2),f_2(x_1))$. Then,
\begin{align*}
\mathrm{deg}_{\mathrm{B}} (f, \mathcal{O}_1 \times \mathcal{O}_2,0)
&=(-1)^n \, \mathrm{deg}_{\mathrm{B}}(g,\mathcal{O}_2 \times \mathcal{O}_1,0)
\\
&= (-1)^n \, \mathrm{deg}_{\mathrm{B}}(f_1,\mathcal{O}_2,0)
\, \mathrm{deg}_{\mathrm{B}}(f_2, \mathcal{O}_1,0).
\end{align*}
\end{proposition}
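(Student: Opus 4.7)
The plan is to verify the two equalities separately. For the first, define the linear isomorphism $\sigma\colon\mathbb{R}^n\times\mathbb{R}^n\to\mathbb{R}^n\times\mathbb{R}^n$ by $\sigma(x_1,x_2)=(x_2,x_1)$, so that by construction $f=g\circ\sigma$ and $\sigma(\mathcal{O}_1\times\mathcal{O}_2)=\mathcal{O}_2\times\mathcal{O}_1$. The transformation rule for the Brouwer degree under a linear change of variables then gives
$$\mathrm{deg}_{\mathrm{B}}(f,\mathcal{O}_1\times\mathcal{O}_2,0) = \mathrm{sign}(\det\sigma)\,\mathrm{deg}_{\mathrm{B}}(g,\mathcal{O}_2\times\mathcal{O}_1,0).$$
Since $\sigma$ is the product of the $n$ disjoint transpositions exchanging the $i$-th and the $(n+i)$-th coordinates, its sign (and hence its determinant) equals $(-1)^n$, which yields the first equality.

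For the second equality, the map $g=f_1\times f_2$ is a Cartesian product, and the needed identity is the classical product formula
$$\mathrm{deg}_{\mathrm{B}}(f_1\times f_2,\mathcal{O}_2\times\mathcal{O}_1,0) = \mathrm{deg}_{\mathrm{B}}(f_1,\mathcal{O}_2,0)\,\mathrm{deg}_{\mathrm{B}}(f_2,\mathcal{O}_1,0).$$
I would prove this by smooth approximation: replace $f_1,f_2$ by $C^\infty$ maps $\tilde f_1,\tilde f_2$, close to $f_i$ in the uniform norm on a neighborhood of $f_i^{-1}(0)$, for which $0$ is a regular value. Then the Jacobian of $\tilde f_1\times\tilde f_2$ is block diagonal with blocks $D\tilde f_1$ and $D\tilde f_2$, so $0$ is automatically a regular value of the product, its preimage is the Cartesian product $\tilde f_1^{-1}(0)\times\tilde f_2^{-1}(0)$, and the signed count of preimages factors as a product of signed counts, giving the formula at the smooth level. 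A standard passage to the limit transfers the identity to the continuous data.

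Admissibility of the factor degrees is immediate: since $f^{-1}(0)=f_2^{-1}(0)\times f_1^{-1}(0)$, compactness of $f^{-1}(0)$ forces both factors to be compact whenever they are nonempty, and empty factors produce degree zero on both sides of the identity. The main obstacle in this scheme is the product formula, which is the only step requiring genuine analytic work; the swap-of-coordinates calculation is pure bookkeeping once the linear change-of-variables rule is granted.
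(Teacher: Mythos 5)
The paper does not actually prove this proposition: it is presented as a ``particular case'' of the composition/multiplication property of the Brouwer degree, and the reader is referred to \cite[Theorem~2.10]{FoGa-95} for the general statement, so there is no in-paper proof to compare against. Your argument is correct and gives a clean self-contained derivation. The first equality follows from the linear change-of-variables rule $\mathrm{deg}_{\mathrm{B}}(g\circ\sigma,U,0)=\mathrm{sign}(\det\sigma)\,\mathrm{deg}_{\mathrm{B}}(g,\sigma(U),0)$ applied to the swap $\sigma(x_1,x_2)=(x_2,x_1)$, and $\det\sigma=(-1)^n$ since $\sigma$ is a product of $n$ disjoint transpositions. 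The second equality is the Cartesian-product formula, which you correctly reduce, via smooth approximation at a regular value with the block-diagonal Jacobian observation, to a signed count that factors. Your admissibility remark is also right: $f^{-1}(0)=f_2^{-1}(0)\times f_1^{-1}(0)$, so compactness passes to the factors when both are nonempty, while an empty factor makes both sides vanish. Compared with invoking the general multiplication theorem cited by the paper (which handles arbitrary decompositions of the zero set via connected components of the complement of the image of the boundary), your route is more elementary, but it relies on the special ``cross'' structure $f(x_1,x_2)=(f_1(x_2),f_2(x_1))$; since that is exactly the case the paper needs, nothing is lost by the simplification.
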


\section{A functional formulation of problem~\eqref{mainpb}}
\label{section-3}

To study the periodic problem \eqref{mainpb}, under conditions \ref{hp-H1}, \ref{hp-H2} and \ref{hp-H3}, we proceed in the spirit of \cite{FeZa-17} and consider the following equivalent first-order problem 
\begin{equation}\label{mainpbfo}
\begin{cases}
\, x_1'=\phi^{-1}(x_2),
\\
\, x_2'=F(\lambda,t,x_1,\phi^{-1}(x_2)),
\\
\, x_1(0)=x_1(T),\quad x_2(0)=x_2(T).
\end{cases}
\end{equation}

Consider the Banach space $\mathcal{C}(\mathopen{[}0,T\mathclose{]},\mathbb{R}^n)$ endowed with the usual norm $\|\cdot\|_{\infty}$ and denote by $\mathcal{AC}_T$ the subspace of the absolutely continuous maps $x$ satisfying $x(0)=x(T)$. 
Given the Banach space $L^1\coloneqq L^1(\mathopen{[}0,T\mathclose{]},\mathbb{R}^n)$ with the usual norm $\|\cdot\|_{L^{1}}$, problem \eqref{mainpbfo} can be written as a nonlinear functional equation parametrized by $\lambda$:
\begin{equation}
\label{nonlinearcoincidenceequation}
\mathcal{L}(x_1,x_2)=\mathcal{N}(\lambda,x_1,x_2),
\end{equation}
in which $\mathcal{L}\colon \mathcal{AC}_T\times \mathcal{AC}_T \to L^1\times L^1$ is the linear operator 
\begin{equation*}
\mathcal{L}(x_1,x_2)=(x_1',x_2'),
\end{equation*}
and $\mathcal{N}\colon\mathbb{R} \times \mathcal{C}(\mathopen{[}0,T\mathclose{]},\mathbb{R}^n)\times \mathcal{C}(\mathopen{[}0,T\mathclose{]},\mathbb{R}^n) \to L^1\times L^1$ is the Nemitskii nonlinear operator
\begin{equation*}
\mathcal{N}(\lambda,x_1,x_2)(t)=\left(\phi^{-1}(x_2(t)),F(\lambda,t,x_1(t),\phi^{-1}(x_2(t)))\right).
\end{equation*}
Notice that $\mathcal{N}$ is $\mathcal{L}$-completely continuous (cf.~\cite{Ma-79}).

To simplify the notation, we will write $\dom(\mathcal{L})$ in place of $\mathcal{AC}_T\times \mathcal{AC}_T$.
It is known that $\mathcal{L}$ is an unbounded Fredholm linear operator of index zero. Its kernel  is the $2n$-dimensional subspace of constant maps and its image is made of pairs of functions with zero mean value. 

Recalling the previous section, consider the following projections:
\begin{align*}
&P\colon \dom(\mathcal{L})\to \dom(\mathcal{L}), \quad P(x_1,x_2)=\dfrac{1}{T} \biggl{(}\int_{0}^T x_1(s)\,\mathrm{d}s, \; \int_{0}^T x_2(s)\,\mathrm{d}s\biggr{)},
\\
&Q\colon L^1\times L^1\to L^1\times L^1, \quad Q(y_1,y_2)=\dfrac{1}{T} \biggl{(}\int_{0}^T y_1(s)\,\mathrm{d}s, \; \int_{0}^T y_2(s)\,\mathrm{d}s\biggr{)}.
\end{align*}  
It is immediate to check that $\Im P=\ker \mathcal{L}$ and $\ker Q=\Im \mathcal{L}$.

Finally, we denote by $J\colon\Im Q\to \ker \mathcal{L}$ the isomorphism $J(y_1,y_2)=(y_1,y_2)$. 
It associates to any pair of constant maps $(y_1,y_2)\in L^1\times L^1$ the same pair $(y_1,y_2)$, though considered as an element of $\dom(\mathcal{L})$.

Let us now tackle problem \eqref{nonlinearcoincidenceequation} in the particular case when $\lambda=0$. We set 
\begin{equation*}
\mathcal{M}_{0}(x_1,x_2)\coloneqq\mathcal{N}(0,x_1,x_2)=\left(\phi^{-1}(x_2),f_0(x_1,\phi^{-1}(x_2))\right).
\end{equation*} 

The following proposition is a finite-dimensional reduction property for the coincidence degree and it will be important in the obtention of our bifurcation result. We give the proof for a sake of completeness.

\begin{proposition}\label{prop_reduction_property}
Let $\mathcal{O}$ be an open subset of $\mathcal{C}(\mathopen{[}0,T\mathclose{]},\mathbb{R}^n)\times \mathcal{C}(\mathopen{[}0,T\mathclose{]},\mathbb{R}^n)$ such that 
\begin{enumerate}[leftmargin=26pt,labelsep=6pt,label=\textup{$(h)$}]
\item the set $\mathcal{S}_{0}\coloneqq\{(x_1,x_2)\in\mathcal{O}\cap\dom \mathcal{L} \colon \mathcal{L}(x_1,x_2)=\mathcal{M}_{0}(x_1,x_2)\}$ is compact.
\label{hp-h-prop}
\end{enumerate}
Then,
\begin{equation*}
\vert \mathrm{D}_{\mathcal{L}}(\mathcal{L}-\mathcal{M}_{0},\mathcal{O} )\vert =\vert \mathrm{deg}_{\mathrm{B}}(f_0(\cdot,0),\mathcal{O}_{1,0}\cap\mathbb{R}^n,0)\vert,
\end{equation*}
where  $\mathcal{O}_{1,0}\coloneqq\{x_1\in \mathcal{C}(\mathopen{[}0,T\mathclose{]},\mathbb{R}^n)\colon (x_1,0)\in \mathcal{O}\}$.
\end{proposition}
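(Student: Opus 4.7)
The plan is to compute the coincidence degree as $\mathrm{D}_{\mathcal{L}}(\mathcal{L} - \mathcal{M}_0, \mathcal{O}) = \deg_{\mathrm{LS}}(\mathrm{Id} - \Phi, \mathcal{O}, 0)$, where
\begin{equation*}
\Phi(x) = Px + JQ\mathcal{M}_0(x) + K_P(\mathrm{Id} - Q)\mathcal{M}_0(x),
\end{equation*}
and then to perform two successive homotopies: an abstract one in $\dom(\mathcal{L})$ that collapses $\Phi$ onto its finite-dimensional part, and a finite-dimensional one within $\ker \mathcal{L}$ that brings the reduced map into the decoupled form required by Proposition~\ref{prop_composition_property}.

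First, I would introduce the homotopy $\Phi^\mu := P + JQ\mathcal{M}_0 + \mu K_P(\mathrm{Id} - Q)\mathcal{M}_0$ for $\mu \in [0, 1]$. Decomposing the fixed-point equation $x = \Phi^\mu(x)$ along $\dom(\mathcal{L}) = \Im P \oplus \ker P$, one checks that $x$ is a fixed point of $\Phi^\mu$ if and only if $\mathcal{L} x = \mu \mathcal{M}_0(x)$ together with $Q\mathcal{M}_0(x) = 0$. At $\mu = 1$ this set is exactly $\mathcal{S}_0$, which is compact by hypothesis~\ref{hp-h-prop}; the principal technical step, and the main obstacle I anticipate, is to secure compactness of the union of these fixed-point sets over $\mu \in [0, 1]$, exploiting the autonomous character of $\mathcal{M}_0$ (via \ref{hp-H3}) and the $\phi$-Laplacian structure, possibly after excising $\mathcal{O}$ to a suitable neighborhood of $\mathcal{S}_0$. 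Once admissibility is secured, homotopy invariance yields $\deg_{\mathrm{LS}}(\mathrm{Id} - \Phi, \mathcal{O}, 0) = \deg_{\mathrm{LS}}(\mathrm{Id} - \Phi^0, \mathcal{O}, 0)$ with $\Phi^0 = P + JQ\mathcal{M}_0$.

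Since $\Phi^0(x) \in \ker \mathcal{L}$ for every $x$, the Leray--Schauder degree of $\mathrm{Id} - \Phi^0$ reduces, in absolute value (cf.~Remark~\ref{J determina il segno}), to the Brouwer degree of its restriction to the finite-dimensional subspace $\ker \mathcal{L}$. Under the natural identification $\ker \mathcal{L} \cong \mathbb{R}^{2n}$ and using $Pc = c$ for constants, this restriction is $-G$, where $G(c_1, c_2) := (\phi^{-1}(c_2), f_0(c_1, \phi^{-1}(c_2)))$; hence
\begin{equation*}
\bigl|\mathrm{D}_{\mathcal{L}}(\mathcal{L} - \mathcal{M}_0, \mathcal{O})\bigr| = \bigl|\mathrm{deg}_{\mathrm{B}}(G, \mathcal{O} \cap \ker \mathcal{L}, 0)\bigr|.
\end{equation*}

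To conclude, I would deform $G$ to $G^0(c_1, c_2) := (\phi^{-1}(c_2), f_0(c_1, 0))$ through $G^\nu(c_1, c_2) = (\phi^{-1}(c_2), f_0(c_1, \nu \phi^{-1}(c_2)))$, $\nu \in [0, 1]$. The zero set of $G^\nu$ is independent of $\nu$---it is always the compact set $\{(c_1, 0) : f_0(c_1, 0) = 0\}$, coinciding with the constant members of $\mathcal{S}_0$---so the finite-dimensional homotopy is admissible. Next, I would excise to a product neighborhood $V \times W \subseteq \mathcal{O} \cap \ker \mathcal{L}$ of the zeros, with $V \subseteq \mathcal{O}_{1,0} \cap \mathbb{R}^n$ an open neighborhood of the zeros of $f_0(\cdot, 0)$ and $W \subseteq \mathbb{R}^n$ a small neighborhood of $0$; such a product exists because $\mathcal{O}$ is open, $\mathcal{O}_{1,0} \cap \mathbb{R}^n$ is open in $\mathbb{R}^n$, and the zero set is compact. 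The map $G^0$ on $V \times W$ has exactly the decoupled form required by Proposition~\ref{prop_composition_property}, which yields
\begin{equation*}
\mathrm{deg}_{\mathrm{B}}(G^0, V \times W, 0) = (-1)^n \, \mathrm{deg}_{\mathrm{B}}(\phi^{-1}, W, 0) \cdot \mathrm{deg}_{\mathrm{B}}(f_0(\cdot, 0), V, 0).
\end{equation*}
Since $\phi$ is a homeomorphism with $\phi(0) = 0$, the first factor has absolute value $1$; a final excision for the Brouwer degree replaces $V$ by $\mathcal{O}_{1,0} \cap \mathbb{R}^n$, and the claimed identity follows on taking absolute values.
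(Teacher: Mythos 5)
Your finite-dimensional part of the argument is essentially the paper's: the homotopy decoupling the two components, the composition property (Proposition~\ref{prop_composition_property}), the observation that $\deg_{\mathrm{B}}(\phi^{-1},\cdot,0)=\pm1$ because $\phi^{-1}$ is a homeomorphism fixing the origin, and a final excision replacing the small neighborhood by $\mathcal{O}_{1,0}\cap\mathbb{R}^n$. (Your homotopy $G^\nu(c_1,c_2)=(\phi^{-1}(c_2),f_0(c_1,\nu\phi^{-1}(c_2)))$ is in fact a cleaner version of the one written in the paper.) The genuinely different step is at the start, and it is where your proof is incomplete.

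The paper reduces the coincidence degree to a Brouwer degree on $\ker\mathcal{L}$ in one stroke by invoking the Capietto--Mawhin--Zanolin continuation theorem in the form of \cite[Theorem~A.3]{FeZa-17}, with hypothesis~\ref{hp-h-prop} as the only input. You instead attempt to re-derive that reduction from scratch, via the homotopy $\Phi^\mu = P + JQ\mathcal{M}_0 + \mu K_P(\mathrm{Id}-Q)\mathcal{M}_0$. You correctly compute that for $\mu\in(0,1]$ the fixed-point set of $\Phi^\mu$ is $\{x\in\mathcal{O}\cap\dom\mathcal{L} \colon \mathcal{L}x=\mu\mathcal{M}_0(x)\}$ (the side condition $Q\mathcal{M}_0(x)=0$ is automatic once $\mathcal{L}x=\mu\mathcal{M}_0(x)$ and $\mu>0$), and for $\mu=0$ it is $\{(c_1,0)\in\ker\mathcal{L}\colon f_0(c_1,0)=0\}$, a subset of $\mathcal{S}_0$. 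But hypothesis~\ref{hp-h-prop} gives compactness of $\mathcal{S}_0$ only at $\mu=1$, and nothing you have written controls the fixed-point sets for $0<\mu<1$; excising $\mathcal{O}$ to a small neighborhood of $\mathcal{S}_0$ does not resolve this, since the $\mu=0$ fixed points already lie in $\mathcal{S}_0$ and nothing prevents intermediate-$\mu$ branches from touching the boundary of any such neighborhood. You yourself flag this as ``the principal technical step, and the main obstacle I anticipate,'' but you leave it as a gap rather than closing it. This is not a cosmetic omission: it is precisely the content that the cited continuation theorem supplies, and without either invoking that theorem or supplying an independent a priori bound uniform in $\mu$, the admissibility of your abstract homotopy is not established and the first equality of degrees does not follow.
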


\begin{proof}
The first step is based on the continuation theorem by Capietto, Mawhin, and Zanolin \cite[Theorem~1]{CMZ-92} in the version of \cite[Theorem~A.3]{FeZa-17}.
Accordingly, by hypothesis \ref{hp-h-prop}, we obtain that 
\begin{equation*}
\mathrm{D}_{\mathcal{L}}(\mathcal{L}-\mathcal M,\mathcal{O})
=\mathrm{deg}_{\mathrm{B}}(\mathfrak{f}_{0},\mathcal{O}\cap \ker \mathcal{L},0),
\end{equation*}
where
\begin{equation*}
\mathfrak{f}_{0}(x_1,x_2)= \left(\phi^{-1}(x_2),f_0(x_1,\phi^{-1}(x_2))\right), \quad (x_1,x_2)\in\mathbb{R}^{2n}.
\end{equation*}

Let us now show that 
\begin{align*}
&\mathrm{deg}_{\mathrm{B}}(\mathfrak{f}_{0},\mathcal{O}\cap \ker \mathcal{L},0)=
\\
&=(-1)^n \, \mathrm{deg}_{\mathrm{B}}(\phi^{-1},\mathcal{O}_2\cap\ker \mathcal{L}_2,0) \, \mathrm{deg}_{\mathrm{B}}(f_0(\cdot,0),\mathcal{O}_{1,0}\cap\ker \mathcal{L}_1,0),
\end{align*}
where $\mathcal{L}(x_1,x_2)=(\mathcal{L}_1x_1,\mathcal{L}_2x_2)$ and $\mathcal{O}_2$ is defined as
\begin{equation*}
\mathcal{O}_2\coloneqq \bigl{\{} x_2\in \dom(\mathcal{L}_2)\colon (x_1,x_2)\in \mathcal{O},\; \text{for some $x_1$} \bigr{\}}.
\end{equation*}
We introduce the homotopy $H\colon \mathopen{[}0,1\mathclose{]}\times (\mathcal{O}\cap\ker \mathcal{L})\to \ker \mathcal{L} $, given by
\begin{equation*}
H(\vartheta,x_1,x_2)=
\left( \phi^{-1}(x_2), f_{0}(x_1,\vartheta x_2)\right).
\end{equation*}
Observe that, for every $(\vartheta, x_1,x_2)\in\mathopen{[}0,1\mathclose{]}\times (\mathcal{O}\cap\ker \mathcal{L})$,
\begin{equation*}
H(\vartheta,x_1,x_2) = 0
\quad \Longrightarrow \quad x_2=0,
\end{equation*}
by hypothesis \ref{hp-H1}.
Therefore, the set of solutions of $H(\vartheta,x_1,x_2)=0$ coincides with $\mathopen{[}0,1\mathclose{]}\times (\mathcal{S}_{0}\cap \ker \mathcal{L})$, which is compact by hypothesis \ref{hp-h-prop}. Applying the homotopy invariance property of the Brouwer degree, we have
\begin{align*}
\mathrm{deg}_{\mathrm{B}}(\mathfrak{f}_{0},\mathcal{O}\cap \ker \mathcal{L},0)
&=\mathrm{deg}_{\mathrm{B}}(H(1,\cdot,\cdot),\mathcal{O}\cap \ker \mathcal{L},0)
\\
&=\mathrm{deg}_{\mathrm{B}}(H(0,\cdot,\cdot),\mathcal{O}\cap \ker \mathcal{L},0).
\end{align*}
It is straightforward to see that the sets
\begin{align*}
&\bigl{\{} (x_1,x_2)\in \mathcal{O}\cap \ker \mathcal{L}\colon H(0,x_1,x_2)=0 \bigr{\}} \quad \text{and}\\ 
&\bigl{\{}(x_1,x_2)\in  (\mathcal{O}_{1,0}\cap \ker \mathcal{L}_1)\times(\mathcal{O}_2\cap \ker \mathcal{L}_2)\colon H(0,x_1,x_2)=0\bigr{\}}
\end{align*}
coincide. Therefore, by the excision property of the Brouwer degree, one has 
\begin{equation*}
\mathrm{deg}_{\mathrm{B}}(H(0,\cdot,\cdot),\mathcal{O}\cap \ker \mathcal{L},0)=\mathrm{deg}_{\mathrm{B}}(H(0,\cdot,\cdot),(\mathcal{O}_{1,0}\cap \ker \mathcal{L}_1)\times(\mathcal{O}_2\cap \ker \mathcal{L}_2),0),
\end{equation*}
and, recalling Proposition~\ref{prop_composition_property},
\begin{align*}
&\mathrm{deg}_{\mathrm{B}}(H(0,\cdot,\cdot),(\mathcal{O}_{1,0}\cap \ker \mathcal{L}_1)\times(\mathcal{O}_2\cap \ker \mathcal{L}_2),0)\\
&=(-1)^n \, \mathrm{deg}_{\mathrm{B}}((-\phi^{-1},-f_0(\cdot,0)),(\mathcal{O}_2\cap \ker \mathcal{L}_2)\times (\mathcal{O}_{1,0}\cap \ker \mathcal{L}_1),0)\\
&=(-1)^n \, \mathrm{deg}_{\mathrm{B}}(-\phi^{-1},\mathcal{O}_2\cap \ker \mathcal{L}_2,0)
\, \mathrm{deg}_{\mathrm{B}}(-f_0(\cdot,0),\mathcal{O}_{1,0}\cap \ker \mathcal{L}_1,0).
\end{align*}

A classical result in Brouwer degree theory says that $\mathrm{deg}_{\mathrm{B}}(\psi,U,0)=\pm1$ if $U$ is an open subset of $\mathbb{R}^n$ and $\psi\colon U\to \mathbb{R}^n$ is continuous, injective with $0\in \psi(U)$ (see e.g.~\cite[Theorem~7.4.5]{DiMa-21} or, more generally, \cite[Proposition~II.17]{Ma-79}). 
By the multiplicativity property (cf.~\cite[Theorem~2.10]{FoGa-95}), we finally deduce that
\begin{equation*}
\mathrm{deg}_{\mathrm{B}}(-f_0(\cdot,0),\mathcal{O}_{1,0}\cap \ker \mathcal{L}_1,0)=(-1)^{n}\mathrm{deg}_{\mathrm{B}}(f_0(\cdot,0),\mathcal{O}_{1,0}\cap \ker \mathcal{L}_1,0)
\end{equation*}
and the proof is complete.
\end{proof}

\section{Proof of Theorem~\ref{genbifthm} and consequences}\label{section-4}

A crucial role in the proof is played by the following  technical result that is a Whyburn-type lemma.
For the proof we refer the reader to  \cite[Lemma~1.4]{FuPe-93}. 

\begin{lemma}\label{whyburn}
Let $Y_0$ be a compact subset of a locally compact metric space $Y$. Assume that any compact subset of $Y$ containing $Y_0$ has nonempty boundary. Then, $Y \backslash Y_0$ contains a not relatively compact connected component whose closure in $Y$ intersects $Y_0$.
\end{lemma}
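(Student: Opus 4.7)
The plan is to argue by contradiction. Suppose every connected component $C$ of $Y \setminus Y_0$ whose closure in $Y$ meets $Y_0$ is relatively compact in $Y$. Under this assumption, I will construct a compact subset $M \subseteq Y$ with $Y_0 \subseteq M$ and $\partial M = \emptyset$, contradicting the hypothesis of the lemma.

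First, observe that $Y$ must be non-compact, for otherwise $Y$ itself is a compact set containing $Y_0$ with empty boundary, against the hypothesis. I would then pass to the one-point compactification $Y^{*} = Y \cup \{\infty\}$, a compact Hausdorff space in which $Y_0$ and $\{\infty\}$ are disjoint closed subsets. Next I would invoke the classical Whyburn separation lemma in $Y^{*}$: given two disjoint closed subsets $A$ and $B$ of a compact Hausdorff space, either some continuum meets both, or $A$ and $B$ lie in disjoint clopen subsets of the space.

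Applied to $A = Y_0$ and $B = \{\infty\}$, one of two alternatives holds. If there exists a clopen set $U \subseteq Y^{*}$ with $Y_0 \subseteq U$ and $\infty \notin U$, then $U \subseteq Y$ is compact (being closed in $Y^{*}$) and clopen in $Y$, so $U$ is a compact set containing $Y_0$ with $\partial U = \emptyset$, contradicting the hypothesis directly. Otherwise, there exists a continuum $K \subseteq Y^{*}$ meeting both $Y_0$ and $\{\infty\}$. I would then apply the boundary bumping principle inside $K$: the connected component $D$ of the open subset $K \setminus Y_0$ of the continuum $K$ that contains $\infty$ must have closure in $K$ meeting $Y_0$. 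Points of $D \cap Y$ accumulate at $\infty$ in $Y^{*}$, so the component $C$ of $Y \setminus Y_0$ containing such a point inherits, through the connectedness of $D$, a closure meeting $Y_0$, while at the same time failing to be relatively compact in $Y$ since it carries a sequence escaping to $\infty$. This contradicts the standing assumption and completes the proof.

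The main technical obstacle lies in the last step. Removing the single point $\infty$ from the connected set $D$ may disconnect it, so a priori points of $D \cap Y$ near $\infty$ may lie in distinct components of $Y \setminus Y_0$, and one must identify a single component enjoying both the non-compactness and the closure property. I would resolve this by a preliminary reduction, via Zorn's lemma applied to the family of continua in $Y^{*}$ joining $Y_0$ and $\{\infty\}$ ordered by reverse inclusion, replacing $K$ by an irreducible such continuum; for an irreducible $K$, minimality forces the component $C$ constructed above to inherit both required properties simultaneously.
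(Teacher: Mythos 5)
The paper does not actually prove this lemma --- it is quoted from \cite[Lemma~1.4]{FuPe-93} --- so your argument has to stand on its own. The architecture you choose (one-point compactification $Y^{*}=Y\cup\{\infty\}$, the Whyburn separation lemma applied to the disjoint closed sets $Y_0$ and $\{\infty\}$, dismissal of the clopen alternative using the hypothesis on compact sets containing $Y_0$, and boundary bumping inside a continuum $K$ joining $Y_0$ to $\infty$) is indeed the classical Furi--Pera route, and all of those steps are correct as stated. The problem is the final step, which you yourself flag as ``the main technical obstacle'': you assert that, after replacing $K$ by a continuum irreducible between $Y_0$ and $\{\infty\}$, ``minimality forces'' the component of $Y\setminus Y_0$ through a point of $D\cap Y$ to be simultaneously not relatively compact and to have closure meeting $Y_0$. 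That sentence is not a proof, and it is exactly where the content of the lemma lies.

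Here is why irreducibility does not obviously close the gap. If $K$ is irreducible between $Y_0\cap K$ and $\infty$, then every proper subcontinuum of $K$ misses either $Y_0$ or $\infty$; consequently a connected set $E\subseteq K\setminus(Y_0\cup\{\infty\})$ has non-compact closure \emph{and} closure meeting $Y_0$ if and only if $\overline{E}=K$, i.e.\ iff $E$ is dense in $K$. So your reduction turns the problem into: show that $K\setminus(Y_0\cup\{\infty\})$ has a dense connected subset (equivalently, a dense component). Boundary bumping only guarantees that each component of this open set has closure meeting $Y_0\cup\{\infty\}$, so a priori the components could split into those clinging to $Y_0$ (hence with compact closure, since they then lie in a proper subcontinuum missing $\infty$) and those clinging to $\infty$ (hence with closure disjoint from $Y_0$); the natural attempt to rule this out by exhibiting a separation of $K$ fails, because a point of one component may be a limit of points from infinitely many other components (components of open subsets of continua are not open). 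One can verify the claim in special cases --- for an indecomposable irreducible $K$ a third composant is dense, connected and avoids both $Y_0$ and $\infty$ --- but no general argument is supplied, and I do not see that the one you gesture at exists in the form stated. As it stands, the proposal is a correct outline of the known strategy with the decisive step left unproven.
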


\begin{proof}[Proof of Theorem~\ref{genbifthm}]
Let us introduce the sets
\begin{align*}
A&\coloneqq\bigl\{(\lambda,x_{1},x_{2}) \in \mathbb{R} \times \mathcal{AC}_T\times \mathcal{AC}_T\colon \mathcal{L} (x_{1},x_{2}) =\mathcal{N}(\lambda,x_{1},x_{2}) \bigr\},
\\
\Sigma &\coloneqq \bigl\{(\lambda,x_{1},x_{2}) \in \Omega\times \mathcal{AC}_T\colon \mathcal{L} (x_{1},x_{2}) =\mathcal{N}(\lambda,x_{1},x_{2}) \bigr\},
\\
\widetilde{\Sigma}_0 &\coloneqq \bigl\{(x_{1},x_{2})\in \widetilde\Omega_0\times \mathcal{AC}_T \colon \mathcal{L} (x_{1},x_{2})=  \mathcal{N}(0,x_{1},x_{2}) \bigr\}.
\end{align*}
We point out that we will consider $A$ and $\Sigma$ endowed with the topology of $\mathbb {R}\times \mathcal{C}(\mathopen{[}0,T\mathclose{]},\mathbb{R}^n)\times \mathcal{C}(\mathopen{[}0,T\mathclose{]},\mathbb{R}^n)$, while $\widetilde{\Sigma}_0$ is endowed with the topology of $ \mathcal{C}(\mathopen{[}0,T\mathclose{]},\mathbb{R}^n)\times \mathcal{C}(\mathopen{[}0,T\mathclose{]},\mathbb{R}^n)$.

We aim to apply Lemma~\ref{whyburn} to the pair $(\Sigma,\{0\}\times \widetilde{\Sigma}_0)$ in place of $(Y, Y_0)$.
First, we notice that 
\begin{equation*}
\widetilde{\Sigma}_0=\bigl{\{} (x_{1},0)\in\widetilde\Omega_0\times\{0\} \colon f_{0}(x_{1},0)=0 \bigr{\}}
\end{equation*}
and thus it is compact since the degree in \eqref{gradononzeroo} is well defined. 
Second, we need to show that $\Sigma$ is locally compact. 

Preliminarily, we verify that $A$ is locally compact. To see this, recalling the equivalence of equations \eqref{functional equation} and \eqref{def-Phi-degree} in a general setting, observe that, in an analogous way, the equation
$\mathcal{L} (x_1,x_2)= \mathcal{N} (\lambda,x_1,x_2)$ is equivalent to $(x_1,x_2)= \Phi(\lambda,x_1,x_2)$, in which $\Phi\colon \mathbb{R} \times \mathcal{C}(\mathopen{[}0,T\mathclose{]},\mathbb{R}^n)\times \mathcal{C}(\mathopen{[}0,T\mathclose{]},\mathbb{R}^n) \to \mathcal{C}(\mathopen{[}0,T\mathclose{]},\mathbb{R}^n)\times \mathcal{C}(\mathopen{[}0,T\mathclose{]},\mathbb{R}^n)$ is given by 
\begin{equation}
\label{Phi  completely continuous}
\Phi(\lambda,x_1,x_2) = P(x_1,x_2) + JQ\mathcal{N}(\lambda,x_1,x_2) + K_P(\mathrm{Id}_{L^1\times L^1} - Q)\mathcal{N}(\lambda,x_1,x_2),
\end{equation}
where $P$, $Q$ and $J$ have been defined in Section \ref{section-3}, and $K_P$ is the algebraic inverse of $\mathcal{L}|_{\mathrm{dom}(\mathcal{L})\cap \ker P}$.
Since $\mathcal{N}$ is $\mathcal{L}$-completely continuous, then $\Phi$ is completely continuous. 
The map $(\lambda,x_1,x_2) \mapsto (x_1,x_2)$ is proper on closed and bounded subsets of $\mathbb{R} \times \mathcal{C}(\mathopen{[}0,T\mathclose{]},\mathbb{R}^n)\times \mathcal{C}(\mathopen{[}0,T\mathclose{]},\mathbb{R}^n)$, and thus so is the map
\begin{equation*}
(\lambda,x_1,x_2) \mapsto (x_1,x_2) - \Phi(\lambda,x_1,x_2),
\end{equation*}
because it is a completely continuous perturbation of a map that, as said before, is proper on closed and bounded sets (see Appendix~\ref{appendix-A} for the details). This implies that $A$ is compact if intersected with closed and bounded sets of $\mathbb{R} \times \mathcal{C}(\mathopen{[}0,T\mathclose{]},\mathbb{R}^n)\times \mathcal{C}(\mathopen{[}0,T\mathclose{]},\mathbb{R}^n)$, and, consequently, $A$ is locally compact.

Since an open subset of a locally compact metric space is locally compact, to obtain the local compactness of $\Sigma$ it is sufficient to prove that $\Sigma$ is open in $A$. 
To this purpose, fix $(\bar{\lambda},\bar{x}_1, \bar{x}_2)\in \Sigma$. As $(\bar{\lambda},\bar{x}_1)\in \Omega$, which is open in $\mathbb{R} \times  \mathcal{C}^1_T$, let $\varepsilon>0$ be such that $(\bar{\lambda}-\varepsilon,\bar{\lambda}+\varepsilon)\times B^1(\bar{x}_1,\varepsilon)\subseteq \Omega$, where $B^1(\bar{x}_1,\varepsilon)$ denotes the ball in $\mathcal{C}^1_T$ with center $\bar{x}_1$ and radius $\varepsilon$. 

Consider the nonlinear operator $\Psi \colon \mathcal{C}(\mathopen{[}0,T\mathclose{]},\mathbb{R}^n) \to \mathcal{C}(\mathopen{[}0,T\mathclose{]},\mathbb{R}^n)$ defined by
\begin{equation*}
\Psi(u)(t)=\phi^{-1}(u(t)), \quad t\in \mathopen{[}0,T\mathclose{]}.
\end{equation*}
Observe that $\Psi$ is a homeomorphism since so is $\phi$. Moreover, $\Psi(\bar{x}_2)=\bar{x}_1'$.
Consequently, there exists a ball  $B^0(\bar{x}_2,\delta)$ in $\mathcal{C}(\mathopen{[}0,T\mathclose{]},\mathbb{R}^n)$ such that 
\begin{equation*}
\Psi (B^0(\bar{x}_2,\delta))\subseteq B^0(\bar{x}_1',\varepsilon/2).
\end{equation*}
Denote by $U$ the open subset of $\mathbb{R} \times \mathcal{C}(\mathopen{[}0,T\mathclose{]},\mathbb{R}^n)\times \mathcal{C}(\mathopen{[}0,T\mathclose{]},\mathbb{R}^n)$ given by
\begin{equation*}
U\coloneqq(\bar{\lambda}-\varepsilon,\bar{\lambda}+\varepsilon)\times B^0(\bar{x}_1,\varepsilon/2) \times B^0(\bar{x}_2,\delta)
\end{equation*}
and call $U_A\coloneqq U\cap A$, which is an open neighbourhood of $(\bar{\lambda},\bar{x}_1,\bar{x}_2)$ in $A$. Observe that every $(\lambda,  x_1,  x_2)\in U_A$ satisfies $x_1\in\mathcal{C}^1(\mathbb{R})$ and $\Psi(x_2)=x_1'$ (since $(\lambda,  x_1,  x_2)\in A$), and moreover $\| x_1' - \bar{x}_1'\| _\infty<\varepsilon/2$ (since $\| x_2- \bar{x}_2 \| _\infty<\delta$). 
This implies that $\|x_1-\bar{x}_1 \| _{\mathcal{C}^{1}} = \|x_{1}-\bar{x}_1\|_{\infty} +\|x_{1}'-\bar{x}_1'\|_{\infty} <\varepsilon$ and thus $( \lambda,  x_1)\in \Omega$, that is, $( \lambda,  x_1,  x_2)\in \Sigma$. Therefore, $U_A$ is actually contained in $\Sigma$ and this proves that $\Sigma$ is open in $A$. We conclude that $\Sigma$ is locally compact.

Next, to apply Lemma~\ref{whyburn}, we need to show that any compact subset of $\Sigma$ containing $\{0\}\times\widetilde{\Sigma}_0$ has nonempty boundary.
We proceed by contradiction and suppose that there exists a compact set $C \subseteq \Sigma$ that contains $\{0\} \times  \widetilde{\Sigma}_0$ and has empty boundary in $\Sigma$. 
Hence, $C$ is open in $\Sigma$.
This implies the existence of an open (and bounded) subset $W$ of $\mathbb{R}\times \mathcal{C}(\mathopen{[}0,T\mathclose{]},\mathbb{R}^n)\times \mathcal{C}(\mathopen{[}0,T\mathclose{]},\mathbb{R}^n)$ containing $C$ and such that $W \cap \Sigma = C$.
Furthermore, without loss of generality, we may assume that $W \cap A = C$. 
Indeed, since $\Sigma$ is open in $A$, there exists an open set $W'\subseteq \mathbb {R}\times \mathcal{C}(\mathopen{[}0,T\mathclose{]},\mathbb{R}^n)\times \mathcal{C}(\mathopen{[}0,T\mathclose{]},\mathbb{R}^n)$ such that $W'\cap A=\Sigma$. 
Then, since $C$ has empty boundary in $\Sigma$, we can define $W$ as a union of open balls with centers at points of $C$, contained in $W'$ and disjoint from $\Sigma\setminus C$. Since $W\subseteq W'$, then $W \cap A = C$.

As a consequence, the degree  $\mathrm{D}_{\mathcal{L}}(\mathcal{L}-\mathcal{N}(\lambda,\cdot,\cdot),W_\lambda,0)$ is well defined, for any $\lambda\in \mathbb{R}$, and the homotopy invariance property implies that it does not depend on $\lambda \in \mathbb{R}$. 
Here,  $W_\lambda$ stands  for the set of those $(x_{1},x_{2})$ such that $(\lambda,x_{1},x_{2})\in W$. We use an analogous notation for the sections of $\Sigma$ and $C$.

Observe that $\widetilde{\Sigma}_0$ and, consequently, $C_{0}$ are not empty by hypothesis \eqref{gradononzeroo}. In addition, by the compactness of $C$, the set $\{\lambda\in \mathbb{R} \colon C_\lambda\neq \emptyset\}$ has minimum $\lambda_{-}$ and maximum $\lambda_{+}$ with $\lambda_{-}\leq0\leq\lambda_{+}$. Since $W$ is open, there exists $\lambda' \in (-\infty,\lambda_{-})\cup(\lambda_{+},+\infty)$ such that $W_{\lambda'}\neq \emptyset$ and, of course, $C_{\lambda'}=\emptyset$.
Recalling that $W_{\lambda'}\cap \Sigma_{\lambda'}= C_{\lambda'}=\emptyset$, by the existence property of the degree we have
\begin{equation*}
\mathrm{D}_{\mathcal{L}}(\mathcal{L}-\mathcal{N}(\lambda',\cdot,\cdot),W_{\lambda'},0)=0.
\end{equation*}
By the homotopy invariance property, it follows 
\begin{equation}
\label{gradozerolambdazero}
\mathrm{D}_{\mathcal{L}}(\mathcal{L}-\mathcal{N}(0,\cdot,\cdot),W_0,0)=0.
\end{equation}
Observing that $C_0$ is compact and $W_0 \cap \Sigma_0 = C_{0}$, hypothesis \ref{hp-h-prop} of Proposition~\ref{prop_reduction_property} (with $\mathcal{O}=W_{0}$) is satisfied and thus we deduce
\begin{equation}
\label{gradozerolambdazeroreduction}
\vert \mathrm{D}_{\mathcal{L}}(\mathcal{L}-\mathcal{N}(0,\cdot,\cdot),W_0,0)\vert =\vert \mathrm{deg}_{\mathrm{B}}(f_0(\cdot,0),\widetilde W_{0},0)\vert,
\end{equation}
where $\widetilde W_{0} \coloneqq \{ x_1\in \mathbb{R}^n \colon (x_{1},0)\in W_0\}$.  
By the construction of $W$, one can observe that $\widetilde \Omega_0$ and $\widetilde W_{0}$ are two open neighbourhoods in $\mathbb{R}^n$ of the set $\{x_1\in \mathbb{R}^n \colon (x_{1},0)\in\widetilde{\Sigma}_0\}$. 
Moreover, recalling the definition of $\widetilde\Sigma_0$ and the fact that $W\cap A=\Sigma$, we have
\begin{align*}
&\{(x_1,0)\in \widetilde \Omega_0 \times\{0\} \colon \mathcal{L} (x_{1},0)=  \mathcal{N}(0,x_{1},0)\}=
\\  
&= \{(x_1,0)\in \widetilde W_{0} \times\{0\} \colon \mathcal{L} (x_{1},0)=  \mathcal{N}(0,x_{1},0)\} 
= \widetilde\Sigma_0.
\end{align*}
Therefore, we can exploit the excision property of the degree together with \eqref{gradononzeroo} obtaining
\begin{equation*}
\mathrm{deg}_{\mathrm{B}}(f_0(\cdot,0),\widetilde W_0,0)=
\mathrm{deg}_{\mathrm{B}}(f_0(\cdot,0),\widetilde\Omega_0,0)
\neq 0.
\end{equation*}
This is in contradiction with \eqref{gradozerolambdazero} via \eqref{gradozerolambdazeroreduction}. 
Consequently, the assumptions of Lemma~\ref{whyburn} are satisfied and hence there exists a connected set $\Xi\subseteq \Sigma\setminus (\{0\}\times\widetilde{\Sigma}_0)$ whose closure in $\Sigma$ is not compact and intersect $\{0\}\times\widetilde{\Sigma}_0$. 

To conclude the proof, we define
\begin{equation}
\label{definition of Gamma}\Gamma \coloneqq \bigl\{ (\lambda,x)\in \mathbb{R}\times \mathcal{C}^{1}_{T} \colon (\lambda,x,\phi(x'))\in \Xi \bigr\}.
\end{equation}
It is immediate to see that $\Gamma$ is contained in $\Omega$, by the definition of $\Sigma$. We claim in addition that 
\begin{itemize}
\item [$(i)$] $\Gamma$ is connected (in the topology of $\mathbb{R}\times \mathcal{C}^{1}_{T}$); 
\item [$(ii)$] the closure $\overline\Gamma$ of $\Gamma$ in $\Omega$ intersects $\{0\} \times \widetilde\Omega_0$;
\item [$(iii)$] $\overline\Gamma$  is not compact.
\end{itemize}

To prove the above facts, define the map $ \xi\colon \Sigma\to \mathbb R \times \mathcal{C}^1_T$ as
\begin{equation}\label{def-map-xi}
 \xi (\lambda, x_1,x_2) =(\lambda,x_1).
\end{equation}
Observe that $ \xi$ is continuous. Indeed, consider a sequence $(\lambda_n,  x_{1,n},  x_{2,n})_n$ in  $\Sigma$ converging to a point $(\bar\lambda,  \bar x_1,  \bar x_2)\in \Sigma$. Since every element of the sequence, as well as its limit, is a solution of $\mathcal{L} (x_{1},x_{2}) =\mathcal{N}(\lambda,x_{1},x_{2})$, then $( x_{1,n})'=\phi^{-1}( x_{2,n})$ and  $\bar x_1'=\phi^{-1}(\bar x_2)$. By the continuity of the map $\Psi$ previously defined, the convergence to zero of $\Vert x_{2,n} -  \bar x_2 \Vert_\infty $ implies the same for $\Vert ( x_{1,n})' -  \bar x_1' \Vert_\infty $. Therefore, $(x_{1,n})_n$ converges to $\bar x_1$ in the $\mathcal{C}^1$-norm. We can conclude that $ \xi$ is continuous.

Concerning property $(i)$, observe that $\xi(\Xi)=\Gamma$ and this proves that this set is connected since so is $\Xi$.
As for property $(ii)$, take a sequence $(\lambda_n,  x_{1,n},  x_{2,n})_n$ in $\Xi$ that converges to an element $(0,\bar x_1,0)\in \{0\}\times \widetilde{\Sigma}_0$. 
By the continuity of $\xi$, the sequence $(\lambda_n,  x_{1,n})_n$, which is contained in $\Gamma$, converges (in the topology of $\mathbb R \times \mathcal{C}^1_T$) to $(0,\bar x_1)\in\{0\} \times \widetilde\Omega_0$, namely the closure $\overline\Gamma$ of $\Gamma$ in $\Omega$ intersects $\{0\} \times \widetilde\Omega_0$. 
Finally, to prove $(iii)$, we show first that $\xi$ is a homeomorphism between $\overline\Xi$ and $\overline\Gamma$, where $\overline \Xi$ is the closure of $\Xi$ in $\Sigma$. We have already seen that $\xi(\Xi)=\Gamma$ and, reasoning as in the verification of $(ii)$, we deduce that 
$\xi(\overline \Xi)\subseteq \overline \Gamma$. Moreover, $\xi \colon\overline \Xi\to \overline \Gamma$ is injective (since $\overline{\Xi} \subseteq \Sigma$ and thus $(\lambda,x_{1},x_{2})\in \overline{\Xi}$ implies that $x_{2}=\phi(x_{1}')$) and onto (since $\overline{\Gamma} \subseteq \xi(\Sigma)$ and so $\overline \Gamma = \xi(\overline \Xi)$). Then, $\xi$ is a bijection from $\overline \Xi$ onto $\overline \Gamma$. In addition, by the continuity of $\Psi^{-1}$, the map $\xi^{-1} \colon \overline \Gamma \to \overline \Xi$ is continuous and this proves that $\overline \Gamma$ is non-compact, otherwise $\overline \Xi$ should be. 
Then, the proof is complete.
\end{proof}

\begin{remark}\label{rem-biforc-both-direction}
The assumptions of Theorem~\ref{genbifthm} are not sufficient to determine whether the connected component $\Gamma$ defined in \eqref{definition of Gamma} contains only pairs $(0,x)$ with $x$ nonconstant, only pairs $(\lambda,x)$ with $\lambda\neq0$, or pairs of both types. 
In the particular case when the constant solutions of
\begin{equation*}
\begin{cases}
\, (\phi(x'))'=f_0(x,x'),
\\
\, x(0)=x(T),\quad x'(0)=x'(T),
\end{cases}
\end{equation*}
in $\Omega_0$ are isolated in $\Omega_0$, the bifurcating branch $\Gamma$ contains $(\lambda,x)$ with $\lambda\neq0$. 
\hfill$\lhd$
\end{remark}

\begin{remark}\label{rem-lambda-interval}
We emphasise that Theorem~\ref{genbifthm} have been presented under the assumption that problem \eqref{mainpb} is well defined for every $\lambda\in\mathbb{R}$, cf.~\ref{hp-H2}. However, we stress that it remains valid when the nonlinear term is the form
\begin{equation}\label{F-bounded-interval}
F\colon I\times\mathopen{[}0,T\mathclose{]}\times\mathbb{R}^n\times\mathbb{R}^n\to\mathbb{R}^n,
\end{equation}
where $I\subseteq \mathbb{R}$ is an open interval containing $0$. Although, for simplicity, the presentation has been carried out under the assumption $I=\mathbb{R}$, it extends naturally to this more general setting with only minor modifications.
\hfill$\lhd$
\end{remark}

Some corollaries are in order.

\begin{corollary}\label{corollario1} 
Let the assumptions of Theorem~\ref{genbifthm} be satisfied.
Then, \eqref{mainpb} admits a connected set of nontrivial solution pairs whose closure in $\mathbb{R}\times \mathcal{C}_T^1$ contains a trivial solution pair and either is unbounded or intersects $\partial\Omega$.
\end{corollary}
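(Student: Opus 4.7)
The plan is to take $\Gamma$ directly from Theorem~\ref{genbifthm} and translate the non-compactness of its closure in $\Omega$ into the asserted dichotomy. Applying Theorem~\ref{genbifthm} I obtain a connected set $\Gamma\subseteq\Omega$ of nontrivial solution pairs whose closure $\overline{\Gamma}^{\Omega}$ in $\Omega$ is non-compact and meets $\{0\}\times\widetilde\Omega_{0}$. Any point in this intersection lies also in the closure $\overline{\Gamma}$ of $\Gamma$ in $\mathbb{R}\times\mathcal{C}^{1}_{T}$ and, being a $\mathcal{C}^{1}$-limit of solution pairs, is itself a solution pair $(0,\bar{x})$; since $\bar{x}\in\widetilde\Omega_{0}\subseteq\mathbb{R}^{n}$ is constant, it is a trivial solution pair. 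This takes care of the first half of the statement.

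For the dichotomy I argue by contradiction and assume that $\overline{\Gamma}$ is bounded in $\mathbb{R}\times\mathcal{C}^{1}_{T}$ and disjoint from $\partial\Omega$. From $\Gamma\subseteq\Omega$ one gets $\overline{\Gamma}\subseteq\overline{\Omega}$, so the two hypotheses together force $\overline{\Gamma}\subseteq\Omega$; consequently $\overline{\Gamma}^{\Omega}=\overline{\Gamma}\cap\Omega=\overline{\Gamma}$. The whole proof then reduces to showing that $\overline{\Gamma}$ is compact, since this would contradict the non-compactness of $\overline{\Gamma}^{\Omega}$ guaranteed by Theorem~\ref{genbifthm}.

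The only non-routine step, and the one I expect to be the main obstacle, is to verify that any bounded subset of solution pairs of \eqref{mainpb} is relatively compact in $\mathbb{R}\times\mathcal{C}^{1}_{T}$. The argument is the standard Arzelà--Ascoli one: for $(\lambda_{n},x_{n})\subseteq\Gamma$ with $|\lambda_{n}|+\|x_{n}\|_{\mathcal{C}^{1}}\leq\rho$, hypothesis \ref{hp-H2} yields $g\in L^{1}(\mathopen{[}0,T\mathclose{]})$ dominating $\|F(\lambda_{n},\cdot,x_{n},x_{n}')\|$; integrating $(\phi(x_{n}'))'=F(\lambda_{n},\cdot,x_{n},x_{n}')$ shows that $\phi(x_{n}')$ is equibounded and equicontinuous, and \ref{hp-H1} together with the uniform continuity of $\phi^{-1}$ on bounded sets of $\mathbb{R}^{n}$ transfers these properties to $x_{n}'$. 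Hence a subsequence of $(x_{n},x_{n}')$ converges in $\mathcal{C}^{0}\times\mathcal{C}^{0}$, i.e.\ $x_{n}$ converges in $\mathcal{C}^{1}_{T}$; after a further extraction making $\lambda_{n}$ convergent, dominated convergence ensures that the limit pair still solves \eqref{mainpb}. This makes $\overline{\Gamma}$ compact and yields the contradiction; everything else in the proof is pure topological bookkeeping between closures taken in $\Omega$ and in $\mathbb{R}\times\mathcal{C}^{1}_{T}$.
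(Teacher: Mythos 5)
Your proof is correct, and it takes a route that differs from the paper's in one interesting way: where to locate the compactness argument. You establish that the set of bounded solution pairs of \eqref{mainpb} is relatively compact in $\mathbb{R}\times\mathcal{C}^1_T$ by a direct Arzel\`a--Ascoli argument on the second-order formulation, using the $L^1$ dominating function from \ref{hp-H2}, the periodicity, and the uniform continuity of $\phi^{-1}$ on bounded sets. The paper instead re-uses the machinery built for Theorem~\ref{genbifthm}: it passes to the equivalent first-order system, pulls $\overline{\Gamma}$ back to $\overline{\Xi}$ via the homeomorphism $\xi(\lambda,x_1,x_2)=(\lambda,x_1)$ (noting that a bound on $x'$ forces a bound on $x_2=\phi(x')$ because $\phi$ maps bounded sets to bounded sets), and then invokes the properness of $(\lambda,x_1,x_2)\mapsto(x_1,x_2)-\Phi(\lambda,x_1,x_2)$ on closed bounded sets, a fact already established (via Appendix~\ref{appendix-A}) in the proof of the theorem. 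Both arguments ultimately rest on the same compactness mechanism -- your version makes the Arzel\`a--Ascoli content explicit and is self-contained, while the paper's version is shorter because it leverages the abstract functional framework already in place. One small imprecision worth noting: you attribute the equiboundedness of $\phi(x_n')$ to the integration; it is more directly a consequence of $\|x_n'\|_\infty\leq\rho$ and the continuity of $\phi$, while the integration of the equation against $g$ gives the equicontinuity. This does not affect the correctness of the argument.
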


\begin{proof}
By Theorem~\ref{genbifthm}, let $\Gamma\subseteq \Omega$ be a connected set of nontrivial solution pairs whose closure $\overline \Gamma$ in $\Omega$ is not compact and intersects $\{0\} \times \widetilde\Omega_0$. 
If $\overline \Gamma\cap \partial\Omega=\emptyset$, then, the closure of $\Gamma$ in $\mathbb{R}\times \mathcal{C}_T^1$ coincides with $\overline \Gamma$. 
We claim that, in this case, $\overline{\Gamma}$ is unbounded.
Assume the contrary and suppose that $\overline{\Gamma}$ is bounded. 
Take, as in the proof of Theorem~\ref{genbifthm}, the map $ \xi\colon \Sigma\to \mathbb R \times \mathcal{C}^1_T$, given by \eqref{def-map-xi} and call $\overline \Xi \coloneqq \xi^{-1}(\overline\Gamma)$. 
As we have seen, $\xi$ is a homeomorphism between $\overline \Xi$ and $\overline\Gamma$. 
Since $\phi$ is a homeomorphism of $\mathbb{R}^n$, it sends bounded subsets of $\mathbb{R}^n$ into bounded subsets of $\mathbb{R}^n$, and this is clearly true also for its inverse $\phi^{-1}$. Consequently, $\overline \Xi$ turns out to be bounded since so is $\overline \Gamma$. However, as seen in the proof of Theorem~\ref{genbifthm} (see also Appendix~\ref{appendix-A}), the map 
\begin{equation*}
(\lambda,x_1,x_2) \mapsto (x_1,x_2) - \Phi(\lambda,x_1,x_2),
\end{equation*}
where $\Phi$ is defined as in \eqref{Phi  completely continuous}, is proper on closed and bounded subsets of $\mathbb{R} \times \mathcal{C}(\mathopen{[}0,T\mathclose{]},\mathbb{R}^n)\times \mathcal{C}(\mathopen{[}0,T\mathclose{]},\mathbb{R}^n)$, and thus $\overline \Xi$ is compact being a closed (in $\mathbb{R} \times \mathcal{C}(\mathopen{[}0,T\mathclose{]},\mathbb{R}^n)\times \mathcal{C}(\mathopen{[}0,T\mathclose{]},\mathbb{R}^n)$ and bounded subset of the zeros of the above map. This implies that $\overline \Gamma$ is compact as well, being $\xi$ a homeomorphism between $\overline \Xi$ and $\overline \Gamma$. This is a contradiction and we conclude that $\overline \Gamma$ is unbounded. Then, the proof is complete.
\end{proof}

\begin{remark}\label{rem-lambda-interval-bis}
In the case where $F$ is as in \eqref{F-bounded-interval}, with $I\subseteq \mathbb{R}$ an open and bounded interval, the statement of Corollary~\ref{corollario1} remains valid, provided that the boundary of $\Omega$ is considered with respect to the topology of $\mathbb{R}\times \mathcal{C}_T^1$, rather than that of $I\times \mathcal{C}_T^1$. This clarification is necessary, as the boundary $\partial\Omega$ may be empty in the latter topology (for instance, when $\Omega=I\times \mathcal{C}_T^1$). 
An illustrative case is provided in Example~\ref{example-5.3} which deals with a problem in which $F$ is defined in $(-1,1)\times \mathbb{R}^2$. In that case, there exists a bounded bifurcating branch of (nontrivial) solutions, whose closure in $\mathbb{R}\times \mathcal{C}_T^1$ contains points of  $\{-1,1\}\times \mathcal{C}_T^1$, that are not solutions pairs (since the problem is not defined for $\lambda\in \{-1,1\}$). 
\hfill$\lhd$
\end{remark}

We conclude this section with two other corollaries. The following result gives a sufficient condition to obtain a bifurcating branch of solutions of \eqref{mainpb} which is unbounded with respect to $\lambda$.
 
\begin{corollary}\label{corollario3} 
Let the assumptions of Theorem~\ref{genbifthm} be satisfied.
Suppose that the set of solutions of $(\phi(x'))'=f_0(x,x')$ is contained in $\Omega_0\coloneqq \{x\in  \mathcal{C}^1_T \colon (0,x)\in \Omega\}$.
Assume that there exists a continuous function $\mathcal{M}\colon\mathbb{R}\to [0,+\infty)$ such that $\Vert x\Vert_{\mathcal{C}^{1}}\leq \mathcal{M}(\lambda)$ for any solution pair $(\lambda,x)$ of \eqref{mainpb}. Then, \eqref{mainpb} admits a connected set of nontrivial solution pairs whose closure in $\mathbb{R}\times \mathcal{C}_T^1$ contains a trivial solution pair and is unbounded with respect to $\lambda$.
\end{corollary}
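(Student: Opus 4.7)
The plan is to invoke Corollary~\ref{corollario1} with the open set $\Omega$ replaced by a carefully enlarged set $V$ that, thanks to the a priori bound, strictly contains every solution pair of~\eqref{mainpb}. Concretely, I would set
\begin{equation*}
V := \bigl\{(\lambda,x) \in \mathbb{R}\times\mathcal{C}_T^{1} \colon \|x\|_{\mathcal{C}^{1}} < \mathcal{M}(\lambda)+1\bigr\},
\end{equation*}
which is open because $\mathcal{M}$ is continuous, and whose $\lambda=0$ section is $\widetilde V_{0} = \{x\in\mathbb{R}^{n}\colon \|x\|<\mathcal{M}(0)+1\}$.

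The first technical step is to verify that $V$ satisfies the hypothesis of Corollary~\ref{corollario1}. Every zero $c\in\mathbb{R}^n$ of $f_0(\cdot,0)$ corresponds to a constant solution of $(\phi(x'))'=f_0(x,x')$, which by the standing assumption on autonomous solutions lies in $\widetilde\Omega_{0}$; moreover, the a priori bound at $\lambda=0$ gives $\|c\|\leq\mathcal{M}(0)<\mathcal{M}(0)+1$, so $c\in\widetilde\Omega_{0}\cap\widetilde V_{0}$. Two applications of the excision property of the Brouwer degree therefore yield
\begin{equation*}
\mathrm{deg}_{\mathrm{B}}(f_0(\cdot,0),\widetilde V_0,0)=\mathrm{deg}_{\mathrm{B}}(f_0(\cdot,0),\widetilde\Omega_0\cap\widetilde V_0,0)=\mathrm{deg}_{\mathrm{B}}(f_0(\cdot,0),\widetilde\Omega_0,0)\neq 0,
\end{equation*}
so Corollary~\ref{corollario1} applied to $V$ furnishes a connected set $\Gamma\subseteq V$ of nontrivial solution pairs whose closure $\overline\Gamma$ in $\mathbb{R}\times\mathcal{C}_T^{1}$ contains a trivial solution pair and is either unbounded or intersects $\partial V$.

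I would then rule out the second alternative by observing that each point of $\overline\Gamma$ is a limit of solution pairs and, by \ref{hp-H2} together with dominated convergence, is itself a solution pair of~\eqref{mainpb}; the a priori bound then forces $\|x\|_{\mathcal{C}^{1}}\leq\mathcal{M}(\lambda)<\mathcal{M}(\lambda)+1$ for every $(\lambda,x)\in\overline\Gamma$, whence $\overline\Gamma\subseteq V$ and $\overline\Gamma\cap\partial V=\emptyset$. Consequently $\overline\Gamma$ must be unbounded. To upgrade this to unboundedness in $\lambda$, I argue by contradiction: if the $\lambda$-projection of $\overline\Gamma$ were contained in some $[-R,R]$, the continuity of $\mathcal{M}$ and the a priori bound would yield $\|x\|_{\mathcal{C}^{1}}\leq\max_{|\lambda|\leq R}\mathcal{M}(\lambda)<+\infty$ on $\overline\Gamma$, forcing $\overline\Gamma$ to be bounded and contradicting the previous step. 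The only delicate point in the argument is the excision step that transfers the nontriviality of the Brouwer degree from $\widetilde\Omega_{0}$ to $\widetilde V_{0}$; everything else is routine packaging of the a priori bound together with the closedness of the set of solution pairs.
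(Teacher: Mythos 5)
Your proof is correct, and it reaches the conclusion by a genuinely different construction of the auxiliary open set, although the overall strategy — apply Corollary~\ref{corollario1} to an auxiliary set on which the branch provably cannot meet the boundary, and then upgrade ordinary unboundedness to $\lambda$-unboundedness via connectedness and the a~priori bound $\mathcal{M}$ — is the same. The paper takes $\Omega':=(\{0\}\times\Omega_0)\cup\bigl((\mathbb{R}\setminus\{0\})\times\mathcal{C}^1_T\bigr)$: its $\lambda=0$ slice is exactly $\widetilde\Omega_0$, so the degree hypothesis of Theorem~\ref{genbifthm} for $\Omega'$ is inherited verbatim, while $\partial\Omega'=\{0\}\times(\mathcal{C}^1_T\setminus\Omega_0)$ is avoided because (by hypothesis and closedness of the solution set) every limit point of the branch with $\lambda=0$ is an autonomous solution and hence lies in $\Omega_0$. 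You instead choose the ``$\mathcal{M}$-tube'' $V=\{(\lambda,x)\colon\|x\|_{\mathcal{C}^1}<\mathcal{M}(\lambda)+1\}$; here the boundary is avoided automatically because the inequality is strict on the solution set, which is a pleasant simplification, but you now pay with the excision step to transfer the degree from $\widetilde\Omega_0$ to $\widetilde V_0$ through $\widetilde\Omega_0\cap\widetilde V_0$, and that step uses both extra hypotheses of the corollary: the assumption on autonomous solutions to place every zero of $f_0(\cdot,0)$ inside $\widetilde\Omega_0$, and the bound $\mathcal{M}$ (applied to constant solution pairs) to place them inside $\widetilde V_0$. In short, the two proofs distribute the corollary's extra hypotheses differently across the argument, but both are used in both proofs, and both are legitimate. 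One small remark: your explicit dominated-convergence verification that $\overline\Gamma$ consists of solution pairs is a correct elementary reroute of something the paper already encodes in the properness of $\mathrm{Id}-\Phi$ (used in Appendix~\ref{appendix-A} and in the proofs of Theorem~\ref{genbifthm} and Corollary~\ref{corollario1}); stating it explicitly is fine and arguably makes the argument more self-contained.
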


\begin{proof}
Let $\Omega'=(\{0\}\times \Omega_0 ) \cup ((\mathbb{R}\setminus\{0\}) \times \mathcal{C}^1_T)$. By Corollary~\ref{corollario1}, since $\partial \Omega'=\{0\}\times(\mathcal{C}^{1}_{T}\setminus\Omega_0)$, we obtain a connected set $\Gamma$ of nontrivial solution pairs of \eqref{mainpb}, whose closure $\overline \Gamma$ in $\mathbb{R}\times \mathcal{C}_T^1$ (this closure coincides with the closure in $\Omega'$) contains a trivial solution pair and is unbounded. 
Let $\lambda_0\in\mathbb{R}$ be given and assume that \eqref{mainpb} admits no solution for a particular $\lambda'>\lambda_{0}$. Therefore, by the connectedness of $\overline \Gamma$, the set $\overline \Gamma \cap([\lambda_0,+\infty)\times \mathcal{C}_T^1)$ is contained in $[\lambda_0,\lambda']\times \mathcal{C}_T^1$ and thus, by the properties of $\mathcal{M}$, it must be bounded. If \eqref{mainpb} also admits no solution for a particular $\lambda''<\lambda_{0}$, analogously, one can prove that $\overline \Gamma \cap((-\infty,\lambda_{0}]\times \mathcal{C}_T^1)$ is bounded.
This is a contradiction.
\end{proof}

\begin{remark}\label{rem-4.2}
Concerning the hypothesis about the limiting function $\mathcal{M}$, we stress that Corollary~\ref{corollario3} is also valid under the assumption that for every $\lambda>0$ there exists a continuous function $\mathcal{M}_{\lambda}\colon\mathopen{[}-\lambda,\lambda\mathclose{]}\to [0,+\infty)$ such that $\| x\|_{\mathcal{C}^{1}}\leq \mathcal{M}_{\lambda}(\lambda')$ for every solution pair $(\lambda',x)\in \mathopen{[}-\lambda,\lambda\mathclose{]}\times\mathcal{C}^{1}_{T}$.
\hfill$\lhd$
\end{remark}

The last corollary concerns bifurcating branches of solutions of \eqref{mainpb} which are unbounded with respect to the norm of $x$.

\begin{corollary}\label{corollario4} 
Let the assumptions of Theorem~\ref{genbifthm} be satisfied.
Let $\lambda_0>0$ be such that no solution pair exists with $\vert \lambda \vert >\lambda_0$. Then, for every $R>0$, problem \eqref{mainpb} admits a solution pair $(\lambda,x)$ with $\Vert x\Vert_{\mathcal{C}^{1}}>R$ and $\vert \lambda\vert \leq\lambda_0$.
\end{corollary}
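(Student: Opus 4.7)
My plan is a proof by contradiction, reducing the claim to the $\lambda$-unboundedness conclusion of Corollary~\ref{corollario3}. Suppose, for contradiction, that there exists $R > 0$ such that no solution pair $(\lambda, x)$ of \eqref{mainpb} satisfies both $\|x\|_{\mathcal{C}^1} > R$ and $|\lambda| \leq \lambda_0$. Combined with the assumption that no solution pair exists with $|\lambda| > \lambda_0$, every solution pair of \eqref{mainpb} then fulfills $\|x\|_{\mathcal{C}^1} \leq R$. The constant function $\mathcal{M}(\lambda) := R$ is therefore a continuous uniform bound on the $\mathcal{C}^1$-norm of all solution pairs, which is precisely the bound hypothesis required by Corollary~\ref{corollario3} (in the form of Remark~\ref{rem-4.2}).

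If we can simultaneously arrange the other hypothesis of Corollary~\ref{corollario3}, namely that every periodic solution of the autonomous unperturbed equation $(\phi(x'))' = f_0(x, x')$ lies in $\Omega_0$, then Corollary~\ref{corollario3} yields a connected set of nontrivial solution pairs whose closure in $\mathbb{R} \times \mathcal{C}_T^1$ is unbounded with respect to $\lambda$. This directly contradicts $|\lambda| \leq \lambda_0$ and closes the argument. Since this hypothesis is not automatic from our assumptions, my plan is to apply Corollary~\ref{corollario3} after enlarging $\Omega$ to an open set of the form $\Omega^{\sharp} := \Omega \cup \big((\mathbb{R}\setminus\{0\}) \times \mathcal{C}^1_T\big) \cup \big(\{0\} \times V\big)$, where $V$ is a carefully chosen open neighborhood in $\mathcal{C}^1_T$ of the (compact, by the properness argument used in the proof of Corollary~\ref{corollario1}) solution set of the autonomous unperturbed equation.

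The main obstacle is the construction of $V$ so that $V \cap \mathbb{R}^n$ introduces no zero of $f_0(\cdot, 0)$ outside $\widetilde{\Omega}_0$; this is what guarantees, via the excision property of the Brouwer degree, that $\mathrm{deg}_{\mathrm{B}}(f_0(\cdot,0), \widetilde{\Omega^{\sharp}}_0, 0) = \mathrm{deg}_{\mathrm{B}}(f_0(\cdot,0), \widetilde{\Omega}_0, 0) \neq 0$, so the hypotheses of Theorem~\ref{genbifthm} remain in force for $\Omega^{\sharp}$. The delicate point arises when non-constant periodic solutions of the autonomous equation accumulate at zeros of $f_0(\cdot,0)$ lying outside $\widetilde{\Omega}_0$; the key tool for managing this is the compactness of the autonomous solution set together with an excision/separation argument splitting $V$ into a neighborhood of the non-constant solutions, disjoint from $\mathbb{R}^n$, and a neighborhood of the constant ones contained in $\widetilde{\Omega}_0$. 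Once $V$ is set, Corollary~\ref{corollario3} applied to $\Omega^{\sharp}$ delivers the $\lambda$-unbounded branch and the sought contradiction.
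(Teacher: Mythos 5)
Your proof shares the paper's core mechanism: argue by contradiction, observe that the contradiction hypothesis combined with the $\lambda_0$ bound yields a uniform $\mathcal{C}^1$-bound on every solution pair, take $\mathcal{M}(\lambda)\equiv R$, and invoke Corollary~\ref{corollario3} to produce a branch unbounded in $\lambda$, which is absurd. The paper's proof consists of exactly this and nothing else; it simply states that with $\mathcal{M}(\lambda)=R^*$ one ``can apply Corollary~\ref{corollario3}.''

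You have correctly noticed something the paper does not address: Corollary~\ref{corollario3} carries a second hypothesis, namely that all $T$-periodic solutions of $(\phi(x'))'=f_0(x,x')$ lie in $\Omega_0$, and this hypothesis is not inherited by the statement of the present corollary. Your idea of enlarging $\Omega$ to $\Omega^{\sharp}$ to manufacture it is natural, but the construction of $V$ gets stuck at precisely the point you flag as ``delicate.'' The constant $T$-periodic solutions of the autonomous equation are exactly the zeros of $f_0(\cdot,0)$ in $\mathbb{R}^n$. If any such zero $\bar\xi$ lies outside $\widetilde{\Omega}_0$, then every open $V\subseteq \mathcal{C}^1_T$ containing the whole autonomous solution set must contain $\bar\xi$, so $\bar\xi\in (V\cap\mathbb{R}^n)\setminus\widetilde{\Omega}_0$. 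Excision is then inapplicable, and by additivity $\mathrm{deg}_{\mathrm{B}}(f_0(\cdot,0),\widetilde{\Omega}_0\cup(V\cap\mathbb{R}^n),0)$ differs from $\mathrm{deg}_{\mathrm{B}}(f_0(\cdot,0),\widetilde{\Omega}_0,0)$ by the (possibly nonzero, possibly cancelling) degree over the additional zeros; the hypothesis of Theorem~\ref{genbifthm} for $\Omega^{\sharp}$ is therefore not guaranteed. Your proposed splitting of $V$ cannot rescue this: a constant solution is an element of $\mathbb{R}^n$, hence cannot be put in a piece of $V$ ``disjoint from $\mathbb{R}^n$''; and it cannot be put in a piece of $V$ ``contained in $\widetilde{\Omega}_0$'' unless it already lies in $\widetilde{\Omega}_0$, which is exactly what is in question. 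Likewise, a family of nonconstant periodic solutions accumulating in $\mathcal{C}^1_T$ at a constant in $\mathbb{R}^n\setminus\widetilde{\Omega}_0$ admits no neighborhood disjoint from $\mathbb{R}^n$. In short, your route works only under the additional assumption that all zeros of $f_0(\cdot,0)$ (and indeed all autonomous $T$-periodic solutions) already lie in $\widetilde{\Omega}_0$/$\Omega_0$, which is the hypothesis you set out to arrange, not one you can derive. The paper's own proof is silent on this verification, so your instinct to scrutinize it is sound, but the enlargement you sketch does not close the gap.
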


\begin{proof}
Assume by contradiction that the set of solution pairs is bounded, that is, there exists $R^{*}>0$ such that $\| x \|_{\mathcal{C}^{1}} \leq R^*$ for every solution pair $(\lambda,x)$. Then, by choosing $\mathcal{M}(\lambda)=R^*$, we can apply Corollary~\ref{corollario3} obtaining a connected set of nontrivial solution pairs unbounded in $\lambda$, a contradiction. 
\end{proof}

\section{Some applications}\label{section-5}

We conclude the paper by presenting some applications of the results proved in Section~\ref{section-4}. In Subsection~\ref{section-5.1}, we illustrate an example where the branch of nontrivial solutions is unbounded with respect to the $\lambda$-component, by an application of Corollary~\ref{corollario3}. In Subsection~\ref{section-5.3}, we study a situation where the branch of nontrivial solutions is unbounded, but bounded in the $\lambda$-component. In Subsection~\ref{section-5.2}, we analyze a case of a bifurcating branch which is bounded. See Figure~\ref{fig-01} for a graphical representation of the results.

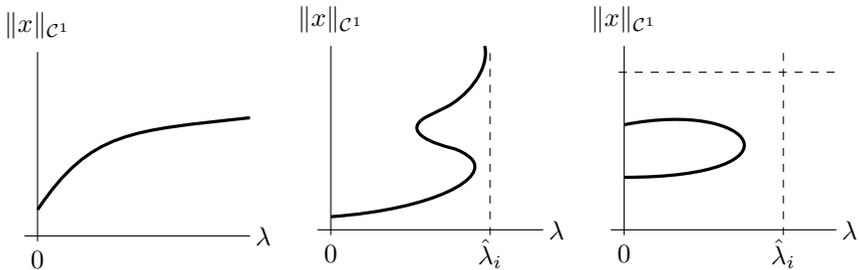
\begin{figure}[ht]
\centering
\begin{tikzpicture}[x=0.35cm,y=0.35cm]
\draw (-0.5,0) -- (8,0);
\draw (0,-0.2) -- (0,7);
\draw (0,8) node {$\|x\|_{\mathcal{C}^{1}}$};
\draw (0,-0.9) node {$0$};
\draw (8.5,0) node {$\lambda$};
\draw [line width=1.2pt, color = black, rounded corners]
(0,1) .. controls (2,4) and (3.5,4) .. (8,4.5);
\end{tikzpicture}
\begin{tikzpicture}[x=0.35cm,y=0.35cm]
\draw (-0.5,0) -- (8,0);
\draw[dashed] (6,-0.2) -- (6,7);
\draw (0,-0.2) -- (0,7);
\draw (0,8) node {$\|x\|_{\mathcal{C}^{1}}$};
\draw (0,-0.9) node {$0$};
\draw (8.5,0) node {$\lambda$};
\draw [line width=1.2pt, color = black, rounded corners]
(0,0.5) .. controls (4,0.8) and (6,2) .. 
(5,3) .. controls (3,3.5) and (3,4) ..
(4,4.5) .. controls (5,5) and (6,6) ..
(5.8,7);
\end{tikzpicture}
\begin{tikzpicture}[x=0.35cm,y=0.35cm]
\draw (-0.5,0) -- (8,0);
\draw[dashed] (6,-0.2) -- (6,7);
\draw[dashed] (-0.2,6) -- (8,6);
\draw (0,-0.2) -- (0,7);
\draw (0,8) node {$\|x\|_{\mathcal{C}^{1}}$};
\draw (0,-0.9) node {$0$};
\draw (8.5,0) node {$\lambda$};
\draw[line width=1.2pt, color = black, domain=0:5.95,smooth,samples=500] plot (\x, {2*sin((12/(\x-6))r)+3});
\end{tikzpicture}
\caption{Qualitative representation of the branches of nontrivial solutions in the three examples considered in Section~\ref{section-5}.}
\label{fig-01}
\end{figure}

We remark that, in order to apply Theorem~\ref{genbifthm} and its corollaries, it is necessary to verify the topological degree condition \eqref{gradononzeroo}. Although computing the Brouwer degree may become intricate in settings of dimension greater than one, the examples considered in this section involve particular classes of mappings for which the degree can be readily determined. For instance, in the case of an odd mapping defined on a symmetric neighbourhood of the origin the classical Borsuk theorem asserts that it has an odd Brouwer degree. We will exploit more general formulations of this result, cf.~\cite[Section~7.3.1]{DiMa-21}.

\subsection{A branch of periodic solutions unbounded in the $\lambda$-component}\label{section-5.1}

We consider the following $T$-periodic boundary value problem associated with a $\phi$-Laplacian Li\'enard-type equation
\begin{equation}\label{eq-example-1}
\begin{cases}
\, ( \phi(x') )' = \ell(\lambda) \dfrac{\mathrm{d}}{\mathrm{d}t} \nabla \mathcal{G}(x) + h(\lambda,t,x),
\vspace*{2pt}
\\
\, x(0) = x(T), \quad x'(0) = x'(T).
\end{cases}
\end{equation}
We assume that $\phi \colon \mathbb{R}^{n} \to \phi(\mathbb{R}^{n})=\mathbb{R}^{n}$ is a homeomorphism such that
\begin{enumerate}[leftmargin=26pt,labelsep=6pt,label=\textup{$(\textsc{a}_{1})$}]
\item $\phi$ is of the form
\begin{equation*}
\phi(\xi) = A(\xi)\xi,
\end{equation*}
where $A\colon \mathbb{R}^{n} \to \mathopen{[}0,+\infty)$ is a continuous function with $A(0)=0$ and $A(\xi)>0$ for $\xi\neq0$.
\label{phi=A}
\end{enumerate}
This assumption includes a wide quantity of nonlinear differential operators considered in the literature, in particular the vector $p$-Laplacians $\phi(\xi)=\Vert\xi\Vert^{p-1}\xi$.
Notice also that
\begin{equation}\label{eq-phi-positive}
\langle \phi(\xi),\xi \rangle = A(\xi) \|\xi\|^{2} = \|\phi(\xi)\| \|\xi\| \geq 0,\quad \text{for every $\xi\in\mathbb{R}^{n}$,}
\end{equation}
and thus, since $\|\phi(\xi)\|\to+\infty$ as $\Vert\xi\Vert\to+\infty$, there exists $\gamma >0$ such that
\begin{equation}\label{prop-A0}
\langle \phi(\xi),\xi \rangle \geq \|\xi\| - \gamma, \quad \text{for every $\xi\in\mathbb{R}^{n}$}
\end{equation}
(compare also with condition $(H_{2})$ in \cite{MaMa-98}).

Let $\ell\colon\mathbb{R}\to\mathbb{R}$ be continuous with $\ell(0)=0$, and $\mathcal{G}\colon\mathbb{R}^{n}\to\mathbb{R}$ be a function of class $\mathcal{C}^{2}$. Let $h=h(\lambda,t,x) \colon \mathbb{R}\times\mathopen{[}0,T\mathclose{]}\times\mathbb{R}^{n}\to \mathbb{R}^{n}$ be a Carath\'eodory function (cf.~hypothesis \ref{hp-H2}) such that
\begin{enumerate}[leftmargin=26pt,labelsep=6pt,label=\textup{$(\textsc{a}_{2})$}]
\item $h(0,t,x)=h_{0}(x)$ for a.e.~$t\in\mathopen{[}0,T\mathclose{]}$ and every $x\in\mathbb{R}^{n}$;
\label{hp-A2}
\end{enumerate}
\begin{enumerate}[leftmargin=26pt,labelsep=6pt,label=\textup{$(\textsc{a}_{3})$}]
\item there exists $R > 0$ such that $\langle h(\lambda,t,x),x\rangle > 0$, for every $\lambda\in\mathbb{R}$, a.e.~$t\in\mathopen{[}0,T\mathclose{]}$, and every $x\in\mathbb{R}^{n}$ with $\|x\| \geq R$.
\label{hp-A3}
\end{enumerate}
We point out that in this setting hypotheses \ref{hp-H1}, \ref{hp-H2}, \ref{hp-H3} are satisfied.

\begin{theorem}\label{th-ex-1}
Let $\phi$, $\ell$, $\mathcal{G}$, and $h$ be as above. Then, problem \eqref{eq-example-1} admits a bifurcating branch of solution pairs unbounded with respect to $\lambda$.
\end{theorem}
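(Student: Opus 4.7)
The plan is to view \eqref{eq-example-1} as an instance of \eqref{mainpb} with
\[
F(\lambda,t,x,y)=\ell(\lambda)\,\nabla^{2}\mathcal{G}(x)\,y+h(\lambda,t,x),
\]
so that by \ref{hp-A2} and $\ell(0)=0$ one has $f_{0}(x,y)=h_{0}(x)$, and then to apply Corollary~\ref{corollario3} in the form refined by Remark~\ref{rem-4.2}. Two ingredients are needed: non-vanishing of the relevant Brouwer degree at $\lambda=0$, and an a priori $\mathcal{C}^{1}$-bound on compact parameter intervals.

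For the degree, I would observe that by \ref{hp-A3} at $\lambda=0$ all zeros of $h_{0}$ lie in the open ball $B_{R}\subset\mathbb{R}^{n}$, and the linear homotopy $H(s,x)=(1-s)x+s\,h_{0}(x)$ satisfies $\langle H(s,x),x\rangle>0$ whenever $\|x\|\geq R$ and $s\in\mathopen{[}0,1\mathclose{]}$. Hence on any ball $B_{R'}$ with $R'>R$ one obtains $\mathrm{deg}_{\mathrm{B}}(h_{0},B_{R'},0)=\mathrm{deg}_{\mathrm{B}}(\mathrm{Id},B_{R'},0)=1$, and choosing $\Omega=\mathbb{R}\times\mathcal{C}^{1}_{T}$ makes \eqref{gradononzeroo} hold via the excision property on $\widetilde\Omega_{0}=\mathbb{R}^{n}$.

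The heart of the argument is the a priori bound. Fix $\lambda_{0}>0$, set $I=\mathopen{[}-\lambda_{0},\lambda_{0}\mathclose{]}$, and let $g\in L^{1}$ denote the Carath\'eodory control for $h$ on $I$ and $\|x\|\leq R$. For any solution pair $(\lambda,x)$ with $\lambda\in I$, I would test the equation against $x$: integration by parts with $T$-periodicity yields $\int_{0}^{T}\langle(\phi(x'))',x\rangle\,\mathrm{d}t=-\int_{0}^{T}\langle\phi(x'),x'\rangle\,\mathrm{d}t$, while $\int_{0}^{T}\langle\tfrac{\mathrm{d}}{\mathrm{d}t}\nabla\mathcal{G}(x),x\rangle\,\mathrm{d}t$ vanishes because $\langle\nabla\mathcal{G}(x),x'\rangle=\tfrac{\mathrm{d}}{\mathrm{d}t}\mathcal{G}(x)$. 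Splitting $\int_{0}^{T}\langle h(\lambda,t,x),x\rangle\,\mathrm{d}t$ according to whether $\|x(t)\|\geq R$ or not, and applying \ref{hp-A3} on the first region together with $|\langle h,x\rangle|\leq R\,g(t)$ on the second, gives $\int_{0}^{T}\langle\phi(x'),x'\rangle\,\mathrm{d}t\leq R\|g\|_{L^{1}}$. By \eqref{prop-A0}, $\|x'\|_{L^{1}}$ is then bounded uniformly on $I$; the same splitting forces the existence of $t^{*}\in\mathopen{[}0,T\mathclose{]}$ with $\|x(t^{*})\|<R$, whence $\|x\|_{\infty}\leq R+\|x'\|_{L^{1}}$ is likewise bounded.

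With $\|x\|_{\infty}$ under control, the right-hand side of the equation is bounded in $L^{1}$, so $\|(\phi(x'))'\|_{L^{1}}$ is bounded on $I$. Continuity of $\|x'(\cdot)\|$ produces $s^{*}\in\mathopen{[}0,T\mathclose{]}$ with $\|x'(s^{*})\|\leq T^{-1}\|x'\|_{L^{1}}$, so $\|\phi(x'(s^{*}))\|$ is bounded by continuity of $\phi$; propagating this via $\phi(x'(t))=\phi(x'(s^{*}))+\int_{s^{*}}^{t}(\phi(x'))'\,\mathrm{d}s$ bounds $\|\phi(x')\|_{\infty}$, and since $\phi$ is a proper homeomorphism (so $\phi^{-1}$ maps bounded sets to bounded sets) $\|x'\|_{\infty}$ will also be bounded. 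All the constants produced depend continuously on the parameter interval, so Remark~\ref{rem-4.2} applies and Corollary~\ref{corollario3} yields the branch unbounded in $\lambda$. I expect the main obstacle to be precisely this $L^{1}$-to-$L^{\infty}$ upgrade on $x'$, which rests essentially on $\phi$ being proper (from \ref{phi=A} together with \eqref{prop-A0}) and on the crucial cancellation, in the energy identity, of the parameter-dependent gradient term $\ell(\lambda)\tfrac{\mathrm{d}}{\mathrm{d}t}\nabla\mathcal{G}(x)$, which would otherwise prevent uniform estimates.
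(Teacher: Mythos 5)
Your proof is correct and follows essentially the same route as the paper's: convert \eqref{eq-example-1} into the form \eqref{mainpb}, derive a $\mathcal{C}^1$ a priori bound on compact $\lambda$-intervals by testing against $x$ (with the $\mathcal{G}$-term dropping out by periodicity), and then invoke Corollary~\ref{corollario3} via Remark~\ref{rem-4.2}. Three small points of divergence are worth noting. First, for the degree you homotope $h_0$ linearly to the identity using \ref{hp-A3}, obtaining $\mathrm{deg}_{\mathrm{B}}(h_0,B_{R'},0)=1$; the paper instead uses an odd-mapping theorem (after checking $h_0(-\xi)\neq\mu\,h_0(\xi)$ on $\|\xi\|=r$) to conclude the degree is odd. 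Both are standard and rely only on \ref{hp-A3}. Second, and more substantially, for the $L^1$-to-$L^\infty$ upgrade on $x'$ you use the mean value theorem to find $s^*$ with $\|x'(s^*)\|\leq T^{-1}\|x'\|_{L^1}$ and then propagate the bound on $\phi(x')$ by integration; the paper instead applies Rolle's theorem to $t\mapsto\langle x(t),x'(\hat t)\rangle$ at a point $\hat t$ maximizing $\|\phi(x'(\cdot))\|$, and uses the structural hypothesis \ref{phi=A} (i.e.\ $\phi(\xi)=A(\xi)\xi$) to deduce $\langle\phi(x'(t_0)),\phi(x'(\hat t))\rangle=0$. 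Your mean-value argument avoids that use of \ref{phi=A} entirely (though \ref{phi=A} is still needed elsewhere, e.g.\ for \eqref{eq-phi-positive} and \eqref{prop-A0}), so it is arguably a small simplification of Step~3. Third, you take $\Omega=\mathbb{R}\times\mathcal{C}^1_T$ rather than the paper's $(\{0\}\times B(0,r))\cup((\mathbb{R}\setminus\{0\})\times\mathcal{C}^1_T)$; this works because the zero set of $h_0$ is compact (so $\mathrm{deg}_{\mathrm{B}}(h_0,\mathbb{R}^n,0)$ is well defined by excision) and Corollary~\ref{corollario3} internally replaces $\Omega$ by the set $\Omega'$ with the correct boundary.
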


\begin{proof}
We aim to show that Corollary~\ref{corollario3} applies.
We divide the proof in some steps.

\smallskip
\noindent
\textit{Step~1. A priori bound of $\|x'\|_{L^{1}}$.} Let $(\lambda,x)$ be a solution pair of \eqref{eq-example-1}. We integrate the scalar product between $x$ and the equation in \eqref{eq-example-1} to obtain 
\begin{equation}\label{eq-3.6}
\int_{0}^{T} \langle \phi(x'(t)),x'(t) \rangle \,\mathrm{d}t 
+ \ell(\lambda) \int_{0}^{T} \langle \dfrac{\mathrm{d}}{\mathrm{d}t} \nabla \mathcal{G}(x(t)),x(t)\rangle\,\mathrm{d}t 
+ \int_{0}^{T}\langle h(\lambda,t,x(t)),x(t)\rangle\,\mathrm{d}t 
=0.
\end{equation}
By the $T$-periodicity of $x$, we have
\begin{equation*}
\int_{0}^{T} \langle \dfrac{\mathrm{d}}{\mathrm{d}t} \nabla \mathcal{G}(x(t)),x(t)\rangle\,\mathrm{d}t 
=- \int_{0}^{T}\langle \nabla \mathcal{G}(x(t)),x'(t)\rangle\,\mathrm{d}t = -\mathcal{G}(x(T))+\mathcal{G}(x(0))=0.
\end{equation*}
Next, we observe that condition \ref{hp-A3} implies the existence of a continuous and positive function $K_{0}$ such that $\langle h(\lambda,t,s),s\rangle \geq -K_{0}(\lambda)$, for every $\lambda\in \mathbb{R}$, a.e.~$t\in\mathopen{[}0,T\mathclose{]}$, and every $s\in\mathbb{R}^{n}$. Therefore, from \eqref{eq-3.6}, we deduce that
\begin{equation*}
\int_{0}^{T} \langle \phi(x'(t)),x'(t) \rangle \,\mathrm{d}t 
= -\int_{0}^{T}\langle h(\lambda,t,x(t)),x(t)\rangle\,\mathrm{d}t
\leq TK_{0} (\lambda).
\end{equation*}
By \eqref{prop-A0}, we obtain that
\begin{equation*}
\|x'\|_{L^{1}} = \int_{0}^{T} \|x'(t)\|\,\mathrm{d}t \leq TK_{0}(\lambda)+T\gamma.
\end{equation*}

\smallskip
\noindent
\textit{Step~2. A priori bound of $\|x\|_{\infty}$.}
Let $(\lambda,x)$ be a solution pair of \eqref{eq-example-1}.
First, we prove that there exists $\tilde{t}\in\mathopen{[}0,T\mathclose{]}$ such that $\|x(\tilde{t})\|<R$, where $R>0$ is defined as in \ref{hp-A3}. Indeed, if it is false, $\|x(t)\|\geq R$ for all $t\in\mathopen{[}0,T\mathclose{]}$. Next, from \eqref{eq-3.6}, \eqref{eq-phi-positive} and \ref{hp-A3}, we find that
\begin{equation*}
0 = \int_{0}^{T} \langle \phi(x'(t)),x'(t) \rangle \,\mathrm{d}t 
+ \int_{0}^{T}\langle h(\lambda,t,x(t)),x(t)\rangle\,\mathrm{d}t > 0,
\end{equation*}
a contradiction. Consequently, for every $t\in\mathopen{[}0,T\mathclose{]}$, we immediately obtain
\begin{equation*}
\|x(t)\|
= \biggl{\|} x(\tilde{t}) + \int_{\tilde{t}}^{t} x'(s)\,\mathrm{d}s \biggr{\|}
\leq \|x(\tilde{t})\| + \int_{0}^{T}\|x'(s)\|\,\mathrm{d}s 
< R+TK_{0}(\lambda)+T\gamma,
\end{equation*}
thus an a priori bound for $\|x\|_{\infty}$. 

\smallskip
\noindent
\textit{Step~3. The limiting functions.}
Let $\lambda>0$ be given. Denote
\begin{equation*}
K(\lambda) \coloneqq \max_{\lambda'\in\mathopen{[}-\lambda,\lambda\mathclose{]}} K_{0}(\lambda'),
\qquad
\hat{H}(\lambda) \coloneqq \max_{\|\xi\|\leq R+TK(\lambda)+T\gamma} \|\mathrm{Hess}\,\mathcal{G}(\xi)\|_{\infty}.
\end{equation*}
Since $h$ is Carath\'{e}odory, there exists a map $g_{\lambda}\in L^1(\mathopen{[}0,T\mathclose{]},[0,+\infty))$ such that, for a.e.~$t\in \mathopen{[}0,T\mathclose{]}$ and every $(\lambda',\xi) \in \mathopen{[}-\lambda,\lambda\mathclose{]}\times \mathbb{R}^n$, with $\Vert \xi\Vert\leq R+TK(\lambda)+T\gamma$, we have $\| h(\lambda',t,\xi)\|\leq g_{\lambda}(t)$.
Let $(\lambda',x)$ be a solution pair of \eqref{eq-example-1} with $\lambda'\in\mathopen{[}-\lambda,\lambda\mathclose{]}$. From
\begin{align*}
\|(\phi(x'))'\|_{L^{1}}
&= \int_{0}^{T} \|(\phi(x'(t)))'\|\,\mathrm{d}t
\leq |\ell(\lambda')| \, \hat{H}(\lambda) \, \|x'\|_{L^{1}}+ \int_{0}^{T} g_{\lambda}(t)\,\mathrm{d}t 
\\
&\leq |\ell(\lambda')| \, \hat{H}(\lambda) \, (TK(\lambda)+T\gamma)+ \|g_{\lambda}\|_{L^{1}} \eqqcolon \mathcal{P}_{\lambda}(\lambda').
\end{align*}
Notice that $\mathcal{P}_{\lambda}$, which is defined in $\mathopen{[}-\lambda,\lambda\mathclose{]}$, is continuous since so is $\ell$. 

We claim that
\begin{align*}
\|\phi(x'(\cdot))\|_{\infty} \leq \mathcal{P}_{\lambda}(\lambda').
\end{align*}
Assume that $x$ is not constant (i.e., $x'\not\equiv0$), otherwise the estimate is trivially satisfied.
Let $\hat{t}\in\mathopen{[}0,T\mathclose{]}$ be such that $\|\phi(x'(\hat{t}))\|=\|\phi(x'(\cdot))\|_{\infty}\neq0$. Via an application of Rolle's theorem to $t\mapsto \langle x(t),x'(\hat{t})\rangle$, we have that there exists $t_{0}\in\mathopen{[}0,T\mathclose{]}$ such that $\langle x'(t_{0}),x'(\hat{t}) \rangle=0$ and thus $\langle \phi(x'(t_{0})),\phi(x'(\hat{t}))\rangle=0$, by \ref{phi=A}. Therefore, we have
\begin{align*}
\|\phi(x'(\cdot))\|_{\infty}
&=\|\phi(x'(\hat{t}))\|
=\bigl{|} \langle \phi(x'(\hat{t})),\phi(x'(\hat{t}))\rangle \bigr{|} /\|\phi(x'(\cdot))\|_{\infty}  
\\
&= \biggl{|} \langle \phi(x'(t_{0})),\phi(x'(\hat{t}))\rangle +\int_{t_{0}}^{\hat{t}} \langle (\phi(x'(s)))',\phi(x'(\hat{t}))\rangle \,\mathrm{d}s \biggr{|}/\|\phi(x'(\cdot))\|_{\infty} 
\\
&\leq \mathcal{P}_{\lambda}(\lambda'),
\end{align*}
where in the last inequality we exploit the Cauchy--Schwarz one.
This ensures an a priori bound for $\|x'\|_{\infty}$. As a consequence of the a priori bound for $\|x\|_{\infty}$ in Step~2, we obtain the existence of a continuous function $\mathcal{M}_{\lambda}$ as in Remark~\ref{rem-4.2}.

\smallskip
\noindent
\textit{Step~4. Conclusion.}
Let $\Omega = (\{0\}\times B(0,r)) \cup((\mathbb{R}\setminus\{0\})\times\mathcal{C}^{1}_{T})$, where \begin{equation*}
r> R+TK_{0}(0)+T\gamma
\end{equation*}
and $B(0,r)$ denotes the open ball in $\mathcal{C}^{1}_{T}$ centered at zero and with radius $r$.
By \ref{hp-A3}, we deduce that $h_{0}(-\xi) \neq \mu \, h_{0}(\xi)$, for every $\xi\in\mathbb{R}^{n}$ with $\|\xi\|=r$ and every $\mu\geq1$. As a consequence, we can apply an odd mapping-type theorem (cf.~\cite[Corollary~7.3.1]{DiMa-21}) to $h_{0}$ in the open ball $\mathcal{B}(0,r)$ in $\mathbb{R}^{n}$ centered at zero and with radius $r$, obtaining that the degree $\mathrm{deg}_{\mathrm{B}}(h_{0},\mathcal{B}(0,r), 0)$ is well defined and non-zero. Therefore, the thesis follows from Corollary~\ref{corollario3}.
\end{proof}

\begin{example}\label{example-5.1}
In order to better illustrate the hypotheses assumed above, we present a very simple situation in which Theorem~\ref{th-ex-1} applies.

Let us consider the $T$-periodic problem associated with the planar system
\begin{equation*}
\begin{cases}
\, ( (|x_{1}'|^{2}+|x_{2}'|^{2}) x_{1}' )' = a_{1}(x_{1},x_{2}) x_{1} + |\lambda| e_{1}(t) x_{1},
\\
\, ( (|x_{1}'|^{2}+|x_{2}'|^{2}) x_{2}' )' = a_{2}(x_{1},x_{2}) x_{2} + |\lambda| e_{2}(t) x_{2},
\end{cases}
\end{equation*}
where $a_{1},a_{2}\in\mathcal{C}(\mathbb{R}\times\mathbb{R}, \mathopen{(}0,+\infty\mathclose{)})$, $e_{1},e_{2}\in\mathcal{C}(\mathopen{[}0,T\mathclose{]},\mathopen{[}0,+\infty\mathclose{)})$, $\lambda\in\mathbb{R}$. This problem corresponds to \eqref{eq-example-1} with $\phi(\xi)=\Vert\xi\Vert^{2}\xi$ (i.e., the vector $3$-Laplacian operator) and $\mathcal{G}\equiv0$.
It is easy to check that hypotheses \ref{phi=A} and \ref{hp-A2} hold true; while \ref{hp-A3} is a consequence of the sign conditions on $a_{1},a_{2},e_{1},e_{2}$.
\end{example}

\subsection{An unbounded branch of periodic solutions, bounded in the $\lambda$-component}\label{section-5.3}

We consider the following $T$-periodic boundary value problem
\begin{equation}\label{eq-example-2}
\begin{cases}
\, ( \phi(x') )' = \lambda \dfrac{\mathrm{d}}{\mathrm{d}t} \nabla \mathcal{G}(x) + h_{0}(x) + \lambda e(t),\\
\, x(0) = x(T), \quad x'(0) = x'(T),
\end{cases}
\end{equation}
where $\phi=(\phi_{1},\ldots,\phi_{n}) \colon \mathbb{R}^{n} \to \phi(\mathbb{R}^{n})=\mathbb{R}^{n}$ is a homeomorphism such that $\phi(0)=0$ and
\begin{enumerate}[leftmargin=26pt,labelsep=6pt,label=\textup{$(\textsc{a}_{4})$}]
\item $\langle \phi(\xi),\xi \rangle >0$, for every $\xi\in\mathbb{R}^{n}\setminus\{0\}$.
\label{hp-A4}
\end{enumerate}
Let $\mathcal{G}\colon\mathbb{R}^{n}\to\mathbb{R}$ be a function of class $\mathcal{C}^{2}$. Let $h_{0}= (\mathfrak{h}_{1},\ldots,\mathfrak{h}_{n}) \colon \mathbb{R}^{n} \to \mathbb{R}^{n}$ be a continuous function satisfying \begin{enumerate}[leftmargin=26pt,labelsep=6pt,label=\textup{$(\textsc{a}_{5})$}]
\item $\langle h_{0}(\xi),\xi \rangle >0$, for every $\xi\in\mathbb{R}^{n}\setminus\{0\}$.
\label{hp-A5}
\end{enumerate}
Assume that $e \colon \mathopen{[}0,T\mathclose{]} \to \mathbb{R}^{n}$ is integrable and such that
\begin{enumerate}[leftmargin=26pt,labelsep=6pt,label=\textup{$(\textsc{a}_{6})$}]
\item there exists $i \in\{1,\ldots,n\}$ such that $\sup_{\mathbb{R}^{n}} |\mathfrak{h}_{i}|<+\infty$ and $\int_{0}^{T} e_{i}(t) \,\mathrm{d}t \neq 0$.
\label{hp-A6}
\end{enumerate}
We stress that in this setting hypotheses \ref{hp-H1}, \ref{hp-H2}, \ref{hp-H3} are satisfied.

\begin{theorem}\label{th-ex-3}
Let $\phi$, $\mathcal{G}$, $h_{0}$, and $e$ be as above. Then, problem \eqref{eq-example-2} admits a bifurcating branch of solution pairs which emanates from the origin and is unbounded with respect to $x$.
\end{theorem}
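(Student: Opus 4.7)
The plan is to apply Theorem~\ref{genbifthm} with $\Omega:=\mathbb{R}\times\mathcal{C}^{1}_{T}$, pass to Corollary~\ref{corollario1} to get an unbounded bifurcating branch, and then use an \emph{a priori} bound on $\lambda$ extracted from \ref{hp-A6} to force that unboundedness into the $x$-component.

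First, I would verify the hypotheses of Theorem~\ref{genbifthm}. Writing $F(\lambda,t,x,y)=\lambda\,\mathrm{Hess}\,\mathcal{G}(x)\,y+h_{0}(x)+\lambda e(t)$, conditions \ref{hp-H1}-\ref{hp-H3} are immediate (in particular $f_{0}(x,y)=h_{0}(x)$). With $\Omega=\mathbb{R}\times\mathcal{C}^{1}_{T}$ one has $\widetilde\Omega_{0}=\mathbb{R}^{n}$; by \ref{hp-A5} the zero set of $h_{0}$ is $\{0\}$, so $\mathrm{deg}_{\mathrm{B}}(h_{0},\mathbb{R}^{n},0)$ is well defined. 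To evaluate it, fix any $r>0$ and use the linear homotopy $H(s,\xi)=(1-s)h_{0}(\xi)+s\xi$: on $\partial\mathcal{B}(0,r)$ and for $s\in\mathopen{[}0,1\mathclose{]}$, \ref{hp-A5} gives $\langle H(s,\xi),\xi\rangle=(1-s)\langle h_{0}(\xi),\xi\rangle+s\|\xi\|^{2}>0$, so $H$ is admissible and, by homotopy invariance together with excision,
\begin{equation*}
\mathrm{deg}_{\mathrm{B}}(h_{0},\mathbb{R}^{n},0)=\mathrm{deg}_{\mathrm{B}}(h_{0},\mathcal{B}(0,r),0)=\mathrm{deg}_{\mathrm{B}}(\mathrm{Id},\mathcal{B}(0,r),0)=1\neq 0.
\end{equation*}
Corollary~\ref{corollario1} then delivers a connected set $\Gamma\subseteq\mathbb{R}\times\mathcal{C}^{1}_{T}$ of nontrivial solution pairs whose closure in $\mathbb{R}\times\mathcal{C}^{1}_{T}$ contains a trivial solution pair and, since $\partial\Omega=\emptyset$ in $\mathbb{R}\times\mathcal{C}^{1}_{T}$, is unbounded. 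By \ref{hp-A5} the only trivial solution pair is $(0,0)$, so $\Gamma$ emanates from the origin.

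The key step is the a priori bound on $\lambda$. Let $i\in\{1,\ldots,n\}$ be as in \ref{hp-A6} and let $(\lambda,x)$ be any solution pair of \eqref{eq-example-2}. Integrating the $i$-th scalar component of the equation over $\mathopen{[}0,T\mathclose{]}$ and using $T$-periodicity of $x$ (and hence of $\nabla\mathcal{G}(x)$) to kill both $\int_{0}^{T}(\phi_{i}(x'(t)))'\,\mathrm{d}t$ and $\lambda\int_{0}^{T}\frac{\mathrm{d}}{\mathrm{d}t}[\nabla\mathcal{G}(x(t))]_{i}\,\mathrm{d}t$, one obtains
\begin{equation*}
\lambda\int_{0}^{T}e_{i}(t)\,\mathrm{d}t=-\int_{0}^{T}\mathfrak{h}_{i}(x(t))\,\mathrm{d}t,
\end{equation*}
and \ref{hp-A6} yields
\begin{equation*}
|\lambda|\leq \lambda_{0}:=\frac{T\sup_{\mathbb{R}^{n}}|\mathfrak{h}_{i}|}{\bigl|\int_{0}^{T}e_{i}(t)\,\mathrm{d}t\bigr|}<+\infty.
\end{equation*}

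To conclude, the closure of $\Gamma$ is unbounded in $\mathbb{R}\times\mathcal{C}^{1}_{T}$ while its projection onto the $\lambda$-axis is contained in $\mathopen{[}-\lambda_{0},\lambda_{0}\mathclose{]}$; hence the unboundedness must occur in the $\mathcal{C}^{1}_{T}$-norm of $x$. The only nontrivial point in the whole argument is the $\lambda$-bound, but it follows cleanly once periodicity is used to eliminate the $(\phi_{i}(x'))'$ and total-derivative contributions in the $i$-th component selected by \ref{hp-A6}; everything else is a direct application of the bifurcation machinery developed in Sections~\ref{section-3} and~\ref{section-4}.
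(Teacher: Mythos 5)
Your proposal is correct and follows essentially the same route as the paper: both hinge on the integral identity $\lambda\int_{0}^{T}e_{i}=-\int_{0}^{T}\mathfrak{h}_{i}(x)$ (obtained by integrating the $i$-th component and using periodicity to kill the $(\phi_{i}(x'))'$ and $\lambda\frac{\mathrm{d}}{\mathrm{d}t}[\nabla\mathcal{G}(x)]_{i}$ terms) to bound $|\lambda|$, on the nonvanishing of $\mathrm{deg}_{\mathrm{B}}(h_{0},\cdot,0)$ via \ref{hp-A5}, and on Corollary~\ref{corollario1}. The only cosmetic divergences are (i) you take $\Omega=\mathbb{R}\times\mathcal{C}^{1}_{T}$ and invoke $\partial\Omega=\emptyset$ to get unboundedness directly, then use the $\lambda$-bound a posteriori, whereas the paper builds the $\lambda$-bound into $\Omega=(-\hat\lambda_{i}-1,\hat\lambda_{i}+1)\times\mathcal{C}^{1}_{T}$ and rules out the $\partial\Omega$ alternative; (ii) you compute the degree by a convex homotopy of $h_{0}$ to the identity (licensed by \ref{hp-A5}), while the paper invokes an odd-mapping-type theorem as in Step~4 of Theorem~\ref{th-ex-1}; and (iii) you omit the paper's verification that $x\equiv 0$ is the only solution of the reduced $\lambda=0$ problem, a step that sharpens the qualitative picture (it forces the branch to contain pairs with $\lambda\neq 0$) but is not needed for the conclusion actually stated in the theorem.
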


\begin{proof}
First, by exploiting \ref{hp-A6}, we show that there is no solution pair $(\lambda,x)$ of problem \eqref{eq-example-2} if $|\lambda|$ is sufficiently large.

Without loss of generality, assume that $\int_{0}^{T} e_{i}(t) \,\mathrm{d}t>0$, being the other case analogous. By an integration of the $i$-th component of the equation in \eqref{eq-example-2}, we deduce that
\begin{equation*}
0 = \int_{0}^{T} (\phi_{i}(x'(t)))' \,\mathrm{d}t = \int_{0}^{T} \mathfrak{h}_{i}(x(t))\,\mathrm{d}t + \lambda \int_{0}^{T}e_{i}(t)\,\mathrm{d}t.
\end{equation*}
Then,
\begin{equation*}
\lambda \int_{0}^{T}e_{i}(t)\,\mathrm{d}t = -\int_{0}^{T} \mathfrak{h}_{i}(x(t))\,\mathrm{d}t \in \left[ -T \sup_{\mathbb{R}^{n}} |\mathfrak{h}_{i}|,T \sup_{\mathbb{R}^{n}} |\mathfrak{h}_{i}| \right].
\end{equation*}
We conclude that for $|\lambda|>\hat{\lambda}_{i}$, with
\begin{equation}\label{def-hat-lambda}
\hat{\lambda}_{i}\coloneqq\dfrac{T \,  \sup_{\mathbb{R}^{n}} |\mathfrak{h}_{i}|}{\int_{0}^{T}e_{i}(t)\,\mathrm{d}t}>0,
\end{equation}
no solution pair $(\lambda,x)$ of \eqref{eq-example-2} exists. 

Second, we focus on problem \eqref{eq-example-2} for $\lambda=0$, that is,
\begin{equation}\label{eq-example-3}
\begin{cases}
\, ( \phi(x') )' = h_{0}(x),
\\
\, x(0) = x(T), \quad x'(0) = x'(T).
\end{cases}
\end{equation}
We are going to prove that the identically zero function is the only solution of \eqref{eq-example-3}.
We observe that $h_{0}(\xi)$ vanishes only at $\xi=0$, by \ref{hp-A5}.
Therefore, if $x$ is a nontrivial solution of problem \eqref{eq-example-3}, then $x$ is not constant. Hence, recalling \ref{hp-A4} and \ref{hp-A5}, and integrating, we obtain
\begin{equation}
0 > - \int_{0}^{T} \langle \phi(x'(t) ), x'(t) \rangle \,\mathrm{d}t = \int_{0}^{T} \langle h_{0}(x(t)),x(t) \rangle \,\mathrm{d}t >0,
\end{equation}
a contradiction.

Let $\Omega\coloneqq(-\hat{\lambda}_{i}-1,\hat{\lambda}_{i}+1)\times \mathcal{C}^{1}_{T}$, where $\hat{\lambda}_{i}$ is defined as in \eqref{def-hat-lambda}. Analogously to Step~4 of the proof of Theorem \ref{th-ex-1}, we infer that the degree $\mathrm{deg}_{\mathrm{B}}(h_{0}, \mathbb{R}^{n}, 0) = \mathrm{deg}_{\mathrm{B}}(h_{0}, \mathcal{B}(0,r), 0) \neq 0$ for every $r>0$ by \ref{hp-A5}, thus the hypotheses of Theorem~\ref{genbifthm} are satisfied.
An application of Corollary~\ref{corollario1} provides a bifurcating branch in $\mathbb{R} \times \mathcal{C}^{1}_{T}$ of solution pairs of \eqref{eq-example-2} emanating from $(0,0)$, which either is unbounded or intersects $\partial\Omega$. Actually, by the choice of $\Omega$, the second situation does not occur. Hence, we conclude that such a branch bifurcates from the trivial solution pair $(0,0)$ and is unbounded with respect to the $x$-component.
\end{proof}

\begin{example}\label{example-5.2}
Analogously to the previous subsection, we show here a simple example in which Theorem~\ref{th-ex-3} applies.
Let us consider the $T$-periodic problem associated with the planar system
\begin{equation*}
\begin{cases}
\, ( (|x_{1}'|^{2}+|x_{2}'|^{2}) x_{1}' )' = a_{1}(x_{1},x_{2}) x_{1} + \lambda e_{1}(t),
\\
\, ( (|x_{1}'|^{2}+|x_{2}'|^{2}) x_{2}' )' = a_{2}(x_{1},x_{2}) x_{2} + \lambda e_{2}(t),
\end{cases}
\end{equation*}
where $a_{1},a_{2}\in\mathcal{C}(\mathbb{R}\times\mathbb{R}, \mathopen{(}0,+\infty\mathclose{)})$, $e_{1},e_{2}\in\mathcal{C}(\mathopen{[}0,T\mathclose{]},\mathbb{R})$, $\lambda\in\mathbb{R}$. This problem corresponds to \eqref{eq-example-2} with $\phi(\xi)=\Vert\xi\Vert^{2}\xi$ and $\mathcal{G}\equiv0$.
It is easy to check that hypotheses \ref{hp-A4} and \ref{hp-A5} hold true. In order to verify hypothesis \ref{hp-A6}, for instance we can take
\begin{equation*}
a_{1}(x_{1},x_{2}) = \dfrac{1}{|x_{1}|+1}, \qquad 
\int_{0}^{T} e_{1}(t) \,\mathrm{d}t \neq 0,
\end{equation*}
hence $\sup_{\mathbb{R}^{2}} |\mathfrak{h}_{1}| = 1$.
\end{example}

\subsection{A bounded branch of periodic solutions}\label{section-5.2}

We consider the following $T$-periodic boundary value problem
\begin{equation}\label{eq-example-4}
\begin{cases}
\, ( \phi(x') )' = h_{0}(x) + g(\lambda) e(t),\\
\, x(0) = x(T), \quad x'(0) = x'(T).
\end{cases}
\end{equation}
As in Subsection~\ref{section-5.1}, we assume that $\phi \colon \mathbb{R}^{n} \to \phi(\mathbb{R}^{n})=\mathbb{R}^{n}$ is a homeomorphism satisfying \ref{phi=A}.
Moreover, let $h_{0}= (\mathfrak{h}_{1},\ldots,\mathfrak{h}_{n}) \colon \mathbb{R}^{n} \to \mathbb{R}^{n}$ be a continuous function satisfying
\begin{enumerate}[leftmargin=26pt,labelsep=6pt,label=\textup{$(\textsc{a}_{7})$}]
\item there exist $R_{0}>0$, $C_{0}>0$, and $\sigma>1$ such that $\langle h_{0}(\xi),\xi \rangle \geq C_{0} \|\xi\|^{\sigma}$ for every $\xi\in\mathbb{R}^{n}$ with $\|\xi\|\geq R_{0}$;
\label{hp-A7}
\end{enumerate}
\begin{enumerate}[leftmargin=26pt,labelsep=6pt,label=\textup{$(\textsc{a}_{8})$}]
\item there exists $\delta>0$ such that 
\begin{equation*}
h_{0}(-\xi)\neq \mu \, h_{0}(\xi), 
\quad 
\text{for every $\mu \geq 1$ and for every $\xi\in\mathbb{R}^{n}$ with $\|\xi\|=\delta$,}
\end{equation*}
and there exists $p\in\mathbb{R}^{n}$ with $\|p\|<\delta$ such that
\begin{equation*}
\bigl{\{} \xi\in \mathbb{R}^{n} \colon h_{0}(\xi) = 0, \; \|\xi\|\leq\delta \bigr{\}} = \{p\}.
\end{equation*}
\label{hp-A8}
\end{enumerate}
Let $g \colon I \to J$ be a continuous function, where $I,J \subseteq \mathbb{R}$ are open bounded intervals with $0\in I$.
Let $e \colon \mathopen{[}0,T\mathclose{]} \to \mathbb{R}^{n}$ be an essentially bounded measurable function.

We observe that the assumption on the function $g$ implies that problem \eqref{eq-example-4} is well-defined only for parameters $\lambda\in I$, where $I \subsetneq \mathbb{R}$ is a proper subset. 
This fact ensures that if $(\lambda,x)$ is a solution pair of problem \eqref{eq-example-4}, then $\lambda$ is bounded; cf.~Remark~\ref{rem-bound-lambda}. 

Moreover, recalling the discussion in Remark~\ref{rem-lambda-interval}, hypotheses \ref{hp-H1}, \ref{hp-H2}, \ref{hp-H3} hold true.

\begin{theorem}\label{th-ex-2}
Let $\phi$, $h_{0}$, $g$ and $e$ be as above. Then, problem \eqref{eq-example-4} admits a bifurcating branch of solution pairs $\Gamma \subseteq I \times \mathcal{C}^{1}_{T}$ which emanates from the point $(0,p)$, is bounded both with respect to $\lambda$ and $x$, and its closure $\overline{\Gamma}$ in $\mathbb{R}\times\mathcal{C}^{1}_{T}$ satisfies at least one of the following scenarios:
\begin{itemize}
\item $\overline{\Gamma}$ contains a point $(0,\bar{x})$ with $\bar{x}\neq p$;
\item $\overline{\Gamma}$ contains a point in $\{\inf I, \sup I \} \times \mathcal{C}^{1}_{T}$.
\end{itemize}
\end{theorem}

\begin{proof}
Our first goal is to find an a priori bound for $\|x\|_{\mathcal{C}^{1}}$, for any solution pair $(\lambda,x)$ of \eqref{eq-example-4}. We divide the proof in some steps.

\smallskip
\noindent
\textit{Step~1. A priori bound of $\|x\|_{\infty}$.} 
According to hypothesis \ref{hp-A7}, let $\hat{R}\geq \max\{R_{0},\delta\}$ be such that
\begin{equation*}
C_{0} \hat{R}^{\sigma-1} - \hat{g} \, \|e\|_{L^{\infty}} > 0,
\qquad
\text{where } \hat{g} \coloneqq \sup_{\lambda\in I} |g(\lambda)|.
\end{equation*}
For a.e.~$t\in\mathopen{[}0,T\mathclose{]}$, every $\lambda\in I$, and every $\xi\in\mathbb{R}^{n}$ with $\|\xi\|\geq \hat{R}$, we have that
\begin{equation}\label{eq-estimate}
\langle h_{0}(\xi)+g(\lambda) e(t), \xi \rangle 
\geq C_{0} \|\xi\|^{\sigma} - |g(\lambda)| |\langle e(t), \xi \rangle| 
\geq \|\xi\| (C_{0} \hat{R}^{\sigma-1} - \hat{g} \, \|e\|_{L^{\infty}}) 
> 0.
\end{equation}
Let $R>\hat{R}$. We claim that $\|x\|_{\infty}\neq R$, for every solution pair $(\lambda,x)$ of \eqref{eq-example-4}.
By contradiction, assume that there exists a solution pair $(\lambda,x)$ of \eqref{eq-example-4} such that $\|x\|_{\infty}= R$. Let $t^{*}\in\mathopen{[}0,T\mathclose{]}$ be such that $\|x(t^*)\|=\|x\|_{\infty}=R$. Since $t^{*}$ is a maximum point of $t\mapsto \|x(t)\|^{2}=\langle x(t),x(t) \rangle$, then it is a critical point of that map, that is $\langle x'(t^{*}),x(t^{*}) \rangle = 0$. By \ref{phi=A}, we deduce that $\langle \phi(x'(t^{*})),x(t^{*}) \rangle = 0$. Moreover,
\begin{equation*}
\dfrac{\mathrm{d}}{\mathrm{d}t} \langle \phi(x'(t)),x(t) \rangle = \langle h_{0}(x(t))+g(\lambda) e(t) ,x(t) \rangle + \langle \phi(x'(t)),x'(t) \rangle,
\quad \text{for a.e.~$t\in\mathopen{[}0,T\mathclose{]}$.}
\end{equation*}
Let $\varepsilon>0$ be such that $\|x(t)\| \geq \hat{R}$ for every $t\in\mathopen{[}t^{*}-\varepsilon,t^{*}+\varepsilon\mathclose{]}$. As a consequence of \eqref{eq-estimate}, we have
\begin{equation*}
\langle h_{0}(x(t))+g(\lambda) e(t) ,x(t) \rangle > 0,
\quad \text{for a.e.~$t\in\mathopen{[}t^{*}-\varepsilon,t^{*}+\varepsilon\mathclose{]}$.}
\end{equation*}
We deduce that the absolutely continuous function $v(t)\coloneqq \langle \phi(x'(t)),x(t) \rangle$ satisfies $v(t^{*})=0$ and 
\begin{equation*}
v'(t)>0, \quad \text{for a.e.~$t\in\mathopen{[}t^{*}-\varepsilon,t^{*}+\varepsilon\mathclose{]}$.}
\end{equation*}
Therefore, $v$ is strictly increasing in $\mathopen{[}t^{*}-\varepsilon,t^{*}+\varepsilon\mathclose{]}$, thus $v(t) < 0$ for all $t\in[t^{*}-\varepsilon,t^{*})$, and $v(t)> 0$ for all $t\in(t^{*},t^{*}+\varepsilon]$. We conclude that $t^{*}$ is not a maximum point of $t\mapsto \|x(t)\|^{2}$. We have reached a contradiction and the claim is proved.

\smallskip
\noindent
\textit{Step~2. A priori bound of $\|x'\|_{\infty}$.} Let $(\lambda,x)$ be a solution pair of \eqref{eq-example-4}. From the previous step we know that $\|x\|_{\infty}< R$.
From
\begin{equation*}
\dfrac{1}{T} \int_{0}^{T} \langle \phi(x'), x'(t) \rangle \,\mathrm{d}t = - \dfrac{1}{T} \int_{0}^{T} \langle h_{0}(x(t)) + g(\lambda) e(t) ,x(t) \rangle  \,\mathrm{d}t \leq KR,
\end{equation*}
where $K \coloneqq \max\{\|h_{0}(\xi)\| \colon \xi\in \mathcal{B}[0,R]\} + \hat{g}\,\|e\|_{L^{\infty}}$, by the mean value theorem we deduce that there exists $t_{0}\in\mathopen{[}0,T\mathclose{]}$ such that $A(x'(t_{0})) \|x'(t_{0})\|^{2} = \langle \phi(x'(t_{0})), x'(t_{0}) \rangle \leq KR$. Since $\|\phi(\xi)\|\to+\infty$ as $\Vert\xi\Vert\to+\infty$, let $L_{R}>0$ be such that if $\|\xi\|>L_{R}$ then $A(\xi) \|\xi\|^{2} > KR$. As a consequence, $\|x'(t_{0})\|\leq L_{R}$.
By an integration we have that
\begin{equation*}
\|\phi(x'(t))\| \leq \| \phi(x'(t_{0}))\| + \int_{0}^{T} \|h_{0}(x(t)) + g(\lambda) e(t)\| \,\mathrm{d}t \leq L^{\phi}_{R} + T K,
\quad \text{for every $t\in\mathopen{[}0,T\mathclose{]}$,}
\end{equation*}
where $L^{\phi}_{R}\coloneqq \max \{\| \phi(\xi)\| \colon \xi \in \mathcal{B}[0,L_{R}]\}$.
Hence $x'(t)\in\phi^{-1}(\mathcal{B}[0,L^{\phi}_{R} + T K])\subseteq \mathcal{B}[0,M_{R}]$ for every $t\in\mathopen{[}0,T\mathclose{]}$, for some $M_{R}>0$.
We conclude that $\|x'\|_{\infty} \leq M_{R}$.

\smallskip
\noindent
\textit{Step~3. Conclusion.} Let $(\lambda,x)$ be a solution pair of \eqref{eq-example-4}. Obviously $\lambda$ belongs to $I$, which is a bounded interval. From the previous discussion, we find that $\|x\|_{\mathcal{C}^{1}} = \|x\|_{\infty} + \|x'\|_{\infty} \leq R+M_{R}$.
Furthermore, exploiting \ref{hp-A8}, we apply an odd mapping-type theorem to $h_{0}$, obtaining that the degree $\mathrm{deg}_{\mathrm{B}}(h_{0},\mathcal{B}(0,\delta), 0)$ is well defined and non-zero (cf.~\cite[Corollary~7.3.1]{DiMa-21}).

Next, we define
\begin{equation*}
\Omega = (\{0\}\times B(0,\delta)) \cup ((I\setminus\{0\})\times B(0,R+M_{R}+1)),
\end{equation*}
which is an open set since $\delta < R+M_{R}+1$. Then, by \ref{hp-A8}, the hypotheses of Theorem~\ref{genbifthm} are satisfied. 
An application of Corollary~\ref{corollario1}, see also Remark~\ref{rem-lambda-interval-bis}, provides a bifurcating branch $\Gamma$ of solution pairs of \eqref{eq-example-4} whose closure $\overline{\Gamma}$, being bounded in both components, contains $(0,p)$ and intersects $\partial \Omega$.
We notice that, for every $(\lambda,x)\in\overline{\Gamma}$, we have $x\notin \partial B(0,\delta)$ if $\lambda=0$ and $x\notin \partial B(0,R+M_{R}+1)$ if $\lambda\neq 0$. Therefore, the alternative trivially holds and the proof is completed.
\end{proof}

\begin{remark}\label{rem-bound-lambda}
If one assumes an additional condition -- compatible with the other hypotheses -- that ensures the boundedness of $\lambda$ (in the spirit of \ref{hp-A6}), then the case $g(\lambda)=\lambda$, corresponding to $I=\mathbb{R}$ as in problem~\eqref{eq-example-2}, can also be considered. In this setting, the boundedness of $\lambda$ directly implies the boundedness of $g$. 

Incidentally, we emphasise that hypotheses \ref{hp-A6} and \ref{hp-A7} are mutually incompatible. Indeed, if a component $\mathfrak{h}_{i}$ of $h_{0}$ is bounded, then $\mathfrak{h}_{i}(\xi) \xi_{i} / \|\xi\|^{\sigma}$ tends to zero as $\|\xi\|\to+\infty$, provided $\sigma>1$. Consequently, assumption~\ref{hp-A7} cannot hold, for instance, when $\xi$ has all components equal to zero except for the $i$-th one.
\hfill$\lhd$
\end{remark}

\begin{example}\label{example-5.3}
In order to clarify the hypotheses assumed above, we discuss a simple situation in which Theorem~\ref{th-ex-2} applies, in the same line of Example~\ref{example-5.1}.

Let us consider the $T$-periodic problem associated with the planar system
\begin{equation}\label{pb-ex-5.3}
\begin{cases}
\, ( (|x_{1}'|^{2}+|x_{2}'|^{2}) x_{1}' )' = \mathfrak{h}_{1}(x_{1},x_{2}) + \sin\dfrac{\lambda}{\sqrt{1-\lambda^{2}}}\, e_{1}(t),
\\
\, ( (|x_{1}'|^{2}+|x_{2}'|^{2}) x_{2}' )' = \mathfrak{h}_{2}(x_{1},x_{2}) + \sin\dfrac{\lambda}{\sqrt{1-\lambda^{2}}} \, e_{2}(t),
\end{cases}
\end{equation}
where $\mathfrak{h}_{1},\mathfrak{h}_{2}\in\mathcal{C}(\mathbb{R}\times\mathbb{R}, \mathbb{R})$, $e_{1},e_{2}\in\mathcal{C}(\mathopen{[}0,T\mathclose{]},\mathbb{R})$, $\lambda\in(-1,1)$. This problem corresponds to \eqref{eq-example-4} with $\phi(\xi)=\Vert\xi\Vert^{2}\xi$.
It is easy to check that hypothesis \ref{phi=A} holds true. In order to verify hypotheses \ref{hp-A7} and \ref{hp-A8}, for instance we can take
\begin{equation*}
\mathfrak{h}_{1}(x_{1},x_{2}) = (x_{1})^{3} + x_{1}(x_{2})^{2},
\quad
\mathfrak{h}_{2}(x_{1},x_{2}) = (x_{2})^{3} + (x_{1})^{2}x_{2}.
\end{equation*}
Hence,
\begin{equation*}
\langle h_{0}(x_1,x_2),(x_1,x_2) \rangle = (x_{1})^{4} + 2 (x_{1}x_{2})^{2} + (x_{2})^{4}= \|(x_1,x_2)\|^{4} \geq \|(x_1,x_2)\|^{2},
\end{equation*}
for $\|(x_1,x_2)\|\geq 1$. Moreover, $h_{0}$ is odd and thus \ref{hp-A8} holds for every $\delta>0$ and $p=0$.
Theorem~\ref{th-ex-2} ensures that the closure of the continuum emanating from $x=0$ contains either a point $(0,\bar{x})$ with $\bar{x}\neq 0$ or a point $(\pm1,x)$ with $x \in \mathcal{C}^{1}_{T}$.
We can exclude the first situation, analogously to the proof of Theorem~\ref{th-ex-3}, as a consequence of \ref{hp-A4} and \ref{hp-A5} which trivially hold true in this case.

We conclude this example by noting that constructing or identifying a scenario in which the first alternative of Theorem~\ref{th-ex-2} is realised appears to be a challenging task. This indicates that such cases may require highly specific conditions, and that numerical investigations could provide valuable insight into their nature.
\end{example}

\appendix
\section{A topological remark}
\label{appendix-A}

In this appendix, we discuss some complementary questions that arose studying the compactness of the set of solutions of our problems. 

We start by pointing out that:
\begin{quote}
\textit{A continuous map $\mathcal{F}$ between two Banach spaces $X$ and $Y$, which is proper on closed and bounded subsets of the domain (i.e., $\mathcal{F}^{-1}(K)\cap C$ is compact for every $K\subseteq Y$ compact and every $C\subseteq X$ closed and bounded), maintains this property if perturbed by a completely continuous map.}
\end{quote}

We show this fact for a sake of completeness. Let $\Psi \colon X\to Y$ be a completely continuous map, $C$ be a closed and bounded subset of $X$, and $K$ be a compact subset of $Y$. Consider a sequence $(x_{n})_{n}$ in $(\mathcal{F}-\Psi)^{-1}(K)\cap C$. Setting $y_{n} \coloneqq \mathcal{F}(x_{n}) -\Psi(x_{n})$, since $(y_{n})_{n} \subseteq K$, then, up to a subsequence, it converges to $\bar{y}\in K$. Moreover, since the sequence $(x_{n})_{n} \subseteq C$ is bounded and $\Psi$ is completely continuous, thus $(\Psi(x_{n}))_{n}$ converges to $\bar{z} \in Y$, up to a subsequence. The set $\hat{K} \coloneqq \{y_{n} + \Psi(x_{n}) \} \cup \{\bar{y} + \bar{z}\}$ is a compact subset of $Y$ and $\mathcal{F}^{-1}(\hat{K})\cap C$ is compact because $\mathcal{F}$ is proper on $C$. Therefore, $x_{n} = \mathcal{F}^{-1}(y_{n} + \Psi(x_{n}))$ converges up to subsequences, as desired.

Thanks to this fact, the intersection of the set of fixed points of a completely continuous map of $X$  with any bounded and closed subset of $X$ is compact (possibly empty). The reader can observe that this result is used in the proofs of Theorem~\ref{genbifthm} and of Corollary~\ref{corollario1}.

The situation is not the same in the case when a \textit{locally} proper map $\mathcal{F} \colon X\to Y$ is perturbed by a completely continuous or, more generally, by a continuous and locally compact map $\Psi \colon X\to Y$.
In this case, the coincidence set $S=\{x\in X \colon \mathcal{F}(x)=\Psi(x) \}$ is locally compact, but its closed and bounded subsets are not necessarily compact.
An easy counterexample is given by considering a Hilbert basis of a separable Hilbert space, which is locally compact, bounded and complete, but not totally bounded. 

This question has an importance in some nonlinear differential equations in which the dependence on the highest-order derivative of the unknown function is nonlinear, as for example in some neutral differential equations (see \cite{BeFu-06,BFMP-05}).
Let us consider the following family of parameterized nonlinear differential equations 
\begin{equation*}
\begin{cases}
\, x'(t) + f(t,x(t),x'(t)) + g(t,x(t)) = 0,
\\
\, x(0) = x(T),
\end{cases}
\end{equation*}
where $f \colon \mathbb{R} \times \mathbb{R}^{n} \times \mathbb{R}^{n} \to \mathbb{R}^{n}$ and $g \colon \mathbb{R}
\times \mathbb{R}^{n} \to \mathbb{R}^{n}$ are $\mathcal{C}^1$ and continuous, respectively.
The Nemitskii operator $\mathcal{N} \colon \mathcal{C}^1_T \to \mathcal{C}(\mathopen{[}0,T\mathclose{]},\mathbb{R}^n)$, given by 
\begin{equation*}
\mathcal{N}(x) (t) =f(t,x(t),x'(t))+g(t,x(t)), \quad t\in \mathopen{[}0,T\mathclose{]},
\end{equation*}
is not compact due to the presence of $x'$ as a variable, which does not allow the use of any Ascoli--Arzel\`{a} argument. In this case, it is useful to consider
\begin{align*}
&F \colon\mathcal{C}^{1}_T \to \mathcal{C}(\mathopen{[}0,T\mathclose{]},\mathbb{R}^n), \qquad
F(x)(t)= x'(t)+ f(t,x(t),x'(t)),
\\
&G \colon \mathcal{C}^{1}_T\to \mathcal{C}(\mathopen{[}0,T\mathclose{]},\mathbb{R}^n), \qquad
G(x)(t)= g(t,x(t)),
\end{align*}
and the coincidence equation $F(x) = G(x)$.
In this case, suitable conditions on $f$ guarantee that $F$ is a nonlinear Fredholm map of index zero, while $G$ is compact. In this framework concerning nonlinear Fredholm maps a more general degree theory can be applied (cf.~\cite{Va-12} and the references therein). The reader can see \cite{Ca-63, Sm-65} for the local properness of nonlinear Fredholm maps.

Coming back to the topic of this section, the above counterexample of a Hilbert basis of a separable Hilbert space could suggest to add the connectedness as a sufficient condition to obtain the compactness of the investigated set. Unfortunately it is not true. Indeed, consider the classical Hilbert basis $(e_n)_n$ in $\ell^2$, call $\Sigma_n$ the line segment joining $e_n$ and $e_{n+1}$, and define
\begin{equation*}
\Sigma=\bigcup_{n\in\mathbb{N}}\Sigma_n.
\end{equation*}
It is not difficult to observe that $\Sigma$ is locally compact, bounded and closed in $\ell^2$, and, in addition, path connected. Clearly, $\Sigma$ is not compact since it contains a closed and not compact subset (the Hilbert basis).

To obtain the compactness, we can add a \textit{geometric} condition (path connectedness is a topological one). Precisely, we can prove that:
\begin{quote}
\textit{A closed, bounded, locally compact and star-shaped subset $C$ of a Banach space $X$ is compact.}
\end{quote}
To prove this fact, call $y$ a point with respect to which $C$ is star-shaped and consider by contradiction a sequence $(x_n)_n$ in $C$ without convergent subsequences. As a consequence, for a suitable $r>0$, $x_n\notin B[y,r]$ for all $n$ sufficiently large, say $n\geq \hat{n}$, where $B[y,r]$ denotes the closed ball of center $y$ and radius $r$ in $X$.
Without loss of generality, we can take $r$ small enough in such a way that $B[y,r]\cap C$ is compact.
For $n\geq \hat{n}$, let
\begin{equation*}
t_n\coloneqq\Vert x_n-y \Vert, \qquad w_n\coloneqq\frac{x_n-y}{\Vert x_n-y \Vert },
\end{equation*}
and observe that
\begin{equation*}
x_n= y+t_n w_n.
\end{equation*}
The point $v_n\coloneqq y+rw_n$ belongs to $\partial B(y,r)$ and to $C$, since $r < t_{n}$ and $C$ is star-shaped with respect to $y$. Therefore  $(v_n)_n$ admits a convergent subsequence which we still call $(v_n)_n$ to simplify the notation. Consequently, $(w_n)_n$ converges, up to a subsequence. 
Moreover, $(t_n)_n$ is bounded and thus converges, up to a subsequence. It implies that $(x_n)_n$ converges, up to a subsequence, to $\bar{x}\in X$, which belongs to $C$ since this set is closed. This proves the compactness of $C$.

The investigation of more general conditions on $C$ allowing to prove the claim is a challenging problem.

\bigskip

\noindent
\textbf{Acknoledgements.} The arguments in this appendix are the result of fruitful discussions with Prof.~Massimo Furi and Prof.~Fabio Zanolin. We warmly thank them for their contributions.
The authors are also grateful to the anonymous referee for the valuable comments and suggestions that have contributed to enhancing the clarity and quality of the manuscript.

\bibliographystyle{elsart-num-sort}
\bibliography{BeFe-biblio}

\end{document}